\newcommand{\circo}{~\raisebox{1pt}{\tikz \draw[line width=0.6pt] circle(1.1pt);}~}
\newtheorem{theorem}{Theorem} 
\newtheorem{lemma}{Lemma}
\newcommand{\mbbR}{\mathbb{R}}
\newcommand{\mbbZ}{\mathbb{Z}}
\newcommand{\mcalA}{\mathcal{A}}
\newcommand{\mcalB}{\mathcal{B}}
\newcommand{\itJ}{\textit{J}}
\newcommand{\bfx}{\textbf{x}}
\newcommand{\bfy}{\textbf{y}}
\newcommand{\bfH}{\textbf{H}}
\newcommand{\bfM}{\textbf{M}}
\newcommand{\bfR}{\textbf{R}}
\newcommand{\bfX}{\textbf{X}}
\newcommand{\bfY}{\textbf{Y}}
\newcommand{\bfB}{\textbf{B}}
\newcommand{\bfZ}{\textbf{Z}}
\newcommand{\bfs}{\textbf{s}}
\newcommand{\mcalG}{\mathcal{G}}
\newcommand{\indep}{\perp \!\!\! \perp}
\newcommand{\nindep}{\not\!\perp\!\!\!\perp}
\newcommand{\mcalS}{\mathcal{S}}
\newcommand{\ttilde}{\Tilde{t}}
\newcommand{\mfkG}{\mathfrak{G}}
\newcommand{\mcalC}{\mathcal{C}}
\newcommand{\mcalN}{\mathcal{N}}
\newcommand{\mcalR}{\mathcal{R}}
\newcommand{\sigalg}{$\sigma$-algebras}
\patchcmd\abstract{\small}{\footnotesize}{}{}
\providecommand{\keywords}[1]
{
  \small	
  \textbf{\textit{Keywords---}} #1
}
\title{On the Stochasticity of Reanalysis Outputs of 4D-Var}
\author[1,2]{Xiaoqing Chen}
\author[3]{Ross Bannister}
\author[1,2]{Gavin Shaddick}
\author[4]{James V. Zidek}
\affil[1]{The Alan Turing Institute;}
\affil[2]{Department of Mathematics and Statistics, 
University of Exeter}
\affil[3]{Department of Meteorology, University of Reading}
\affil[4]{Department of Statistics, University of British Columbia}
\affil[]{\textit{xchen@turing.ac.uk}}
\date{\vspace{-3ex}}
\begin{document}
\maketitle
\sloppy

\begin{abstract}
This work is motivated by the ECMWF CAMS reanalysis data, a valuable resource for researchers in environmental-related areas, as they contain the most updated atmospheric composition information on a global scale.  
Unlike observational data obtained from monitoring equipment, such reanalysis data are produced by computers via a 4D-Var data assimilation mechanism, thus their stochastic property remains largely unclear.
Such lack of knowledge in turn limits their utility scope and 
hinders them from wider and more flexible statistical usages, especially spatio-temporal modelling except for uncertainty quantification and data fusion. 
Therefore, this paper studies the stochastic property of these reanalysis outputs data.  
We used measure theory and proved tangible existence of spatial and temporal stochasticity associated with these reanalysis data 
and revealed that they are essentially realisations from digitised versions of real-world hidden spatial and/or temporal stochastic processes.  
This means we can treat the reanalysis outputs data the same as observational data in practice and thus ensures more flexible spatio-temporal stochastic methodologies apply to them.
We also objectively analysed different types of errors in the reanalysis data and deciphered their mutual dependence/independence, which together give clear and definite guidance on the modelling of error terms. 
The results of this study also serve as a solid stepping stone for spatio-temporal modellers and environmental AI researchers to embark on their research directly with these reanalysis outputs data using stochastic models. 

\end{abstract}

\keywords{ECMWF CAMS reanalysis data, 4D-Var reanalysis outputs, spatio-temporal process, stochastic process}

 \section{Introduction}
\label{intro}

This work is motivated by the ECMWF CAMS reanalysis data set. 

The European Centre for Medium-Range Weather Forecasts (ECMWF) implements the Copernicus Atmosphere Monitoring Service (CAMS) on behalf of the European Union \citep{CAMS} and produces the CAMS reanalysis data product,  which is the most updated version of the reanalysis data set of atmospheric compositions including aerosols, e.g., PM25, PM10, chemical species, e.g., sulfate, black carbon, as well as greenhouse gases, e.g., CO2, NO2 etc. on a global scale, consisting of four-dimensional (three spatial dimensions and one temporal dimension) atmospheric composition fields and currently covering 2003 to 2016 (and will be extended one year ahead each year) \citep{inness2019cams}. 

Therefore, the ECMWF CAMS reanalysis data product has become important and resourceful information for researchers in many different environmental-related research areas, such as spatio-temporal modelling, public health, climate change, environmental intelligence etc. to study environmental-related research topics and to support policymakers to address environmental-related concerns as well as the corresponding impacts.

Unlike observational measurement data set, which are normally obtained from monitoring equipment and therefore are usually deemed as one realisation from real-world hidden spatial and/or temporal processes with intrinsic randomness, 
the ECMWF CAMS reanalysis data are produced via an \textit{incremental 4D-Var data assimilation methodology} \citep{courtier1994strategy} in which observations such as satellite retrievals of total column CO, aerosol optical depth etc. are assimilated with 12-hour assimilation windows from 09:00 to 21:00 and two spectral truncation T95 (210 km) and T159 (110 km) \citep{inness2019cams}, and are therefore obtained not from monitoring equipment directly but from computers.

Same produced by computers, conventional numerical weather forecast models usually use partial differential equations to model the physical law of the atmospheric compositions \citep{lorenz1963deterministic}
and hence their model outputs are usually deemed deterministic. However, whether the reanalysis outputs that are generated from computers yet via a 4D-Var data assimilation (DA) mechanism are deterministic or stochastic remains largely unclear in the current literature. 

Consequently, the utilities of such computer-generated reanalysis outputs are restricted to either empirical utilities such as climatological computing (e.g., climatological means, percentiles etc.), trend studying, geographical visualisation mapping \citep{inness2019cams},
or limited statistical applications, one is uncertainty quantification and the other is data fusion. 

The uncertainty quantification is 
based on assuming the mathematical model executed by the computer is deterministic and viewing it as a ``black box" that makes no use of any of the mathematical information in the model \citep[Section.~1.1]{kennedy2001bayesian}, whereas insisting on the existence of a number of uncertainties in the model outputs. 
\citet[Section.~2]{kennedy2001bayesian} introduced one possible way to subjectively classify different sources of uncertainty under the postulation of the computer model being a ``black box". 

The explicit quantification method is 
through building a statistical model on an \textit{ensemble} of computer model outputs, 
and the justification for bringing in statistical models in the presence of deterministic computer model outputs
is because of the limited availability of the members of an ensemble either due to onerous computational time per run or limited availability of different models or funding etc. \citep[Section.~1]{sain2011spatial}.

The central idea of fusion method is to relate the computer model outputs to observational data, see, e.g. \citet[Chp.~5]{kalnay2003atmospheric} and the references therein. One recently-developed fusion method particularly in spatio-temporal research field is the downscaler \citep{berrocal2010spatio}. The downscaler model
relates monitoring observation data $Z(s)$ at location $s$ to computer model outputs $x(B)$ for a specific grid cell $B$ in a linear regression fashion along with an additive Gaussian random measurement error, and their spatially-varying regression coefficients are further jointly modelled as a bivariate Gaussian spatial process via a classical multivariate modelling scheme called the linear model of coregionalization (LMC), see \citet[p.~172]{wackernagel2013multivariate} or \citet[p.~278]{banerjee2014hierarchical}.
For more details on this method see \citet{berrocal2010spatio} or \citet{zidek2012combining}. For the direct application of such a method on the CAMS reanalysis data see \citet{shaddick2020global}.

However, such a fusion modelling method together with observations, to some extent, covers the net stochastic property of the computer outputs data. For instance, in \citet{shaddick2020global}), the readers are unsure about whether the Gaussian stochastic process and random errors in the downscaler models are purely attributed to random monitoring observations or shared by both the monitoring observations and the computer outputs CAMS reanalysis data.

The vague and unknown stochastic property of the computer-generated reanalysis data thus hinders the utility of such data set from wider and more flexible statistical modelling applications. For example, it is not clear whether or not the standard stochastic spatio-temporal modelling framework with additive spatial and temporal random terms, e.g, \textit{Data = Covariates + Spatial random effects + Temporal random effects + (Spatio-temporal interaction) + Random measurement error} \citep[p. 304-305]{cressie2015statistics} can be applied to these 4D-Var reanalysis outputs (i.e., ECMWF CAMS reanalysis data) solely, and whether or not classical multivariate spatial modelling framework (e.g., stochastic co-kriging) can be applied to this CAMS reanalysis data solely without fussing with other monitoring observational data should the joint relationship between different atmospheric compositions within this reanalysis data set are of particular research interest.  

Admittedly, some may argue that from a Bayesian perspective, everything can be random, and we argue that Bayesian’s randomness can only be passively associated with every part (including parameters) of the already-constructed model but is unable to actively justify or indicate the model structure whether to involve spatial and/or temporal random parts or not, which is almost completely guided by the properties and features of the data set themselves. Only when the data set do exhibit spatial and/or temporal stochastic properties, can we involve them in the model. Or from a general error view, only when there are errors induced by spatial and/or temporal sources, can we decompose the general error term of a simple regression model into spatial random effects, temporal random effects, and other measurement errors.

To this end, this paper studies the stochastic property of the reanalysis outputs of 4D-Var. 

To our best knowledge, we are the first to open up the ``black box" of the 4D-Var DA mechanism and then use measure theory to prove 
the existence of spatial and temporal stochasticity associated with the reanalysis outputs. And unlike \citet[Section.~2]{kennedy2001bayesian} who subjectively postulated the possible errors in the computer model outputs, we objectively analysed the different types of error sources as well as their mutual dependence/independence relationship according to the 4D-Var mechanism to complete our understanding towards the stochasticity of the reanalysis outputs and consequently, be able to provide a clear and definite answer to the unsureness about the CAMS reanalysis data in actual modelling practice discussed above. 

In Section 2, we present an exposition of the 4D-Var DA mechanism to open up the ``black box". 
In Section 3, we prepare the readers with concepts relating to stochastic processes and propose our conjecture about the existence of stochasticity in the 4D-Var reanalysis outputs. 
The measure-theory proofs are in Section 4, in which we lay down our proofs from two different perspectives. In Section 5, we focus on the dissection of different types of random errors associated with these reanalysis outputs 
and their mutual dependence/independence properties. 
We end this paper with a discussion including the practical bearings of this research in Section 6.

\section{Mechanism of 4D-Var DA} 
\label{mechanism}

In this section, we crystallize the ``black box" 4D-Var DA.

\citep{bannister2001elementary} 4D-Var DA is a method which finds the best possible initial state $\textbf{x}_{\cdot}$ for every data assimilation cycle. The optimized initial state $\bfx_{\cdot}^A $ (the dot in the subscript here means any iteration cycle) is \textit{de facto} our desired reanalysis output which is obtained from minimizing a cost function $\textit{J}$ measuring misfits between two terms:
one is the discrepancy between an arbitrary state $\textbf{x}_{\cdot}$ at every assimilation cycle
    and a so-called background state $\textbf{x}_{\cdot}^B$,
and the other is the difference between the predicted observations $\textbf{\textit{H}}_{\ttilde} ^o \cdot \textbf{x}_{\ttilde}$ for a given collection time window
    and the real observations $\textbf{y}_{\ttilde}$ collected during this particular time window.  

The cost function $\itJ$ is defined as :
\begin{equation}
    \label{J_fun} 
    J[\bfx_{k \Delta t}] = \frac{1}{2} (\bfx_{k \Delta t} ^B - \bfx_{k \Delta t})^T \textbf{B}_{k \Delta t} ^{-1} (\bfx_{k \Delta t} ^B - \bfx_{k \Delta t}) 
    + \frac{1}{2} \sum_{\ttilde = k \Delta t} ^{(k + 1) \Delta t} (\bfy_{\ttilde} - \bfH_{\ttilde} ^o \cdot \bfx_{\ttilde})^T \textbf{R}_t^{-1} (\bfy_{\ttilde} - \bfH_{\ttilde} ^o \cdot \bfx_{\ttilde}),
\end{equation} 
where 
\begin{itemize}
    \item $k$: the counter for each run of assimilation cycle; $k = 0, 1, 2, \ldots$; 

    \item $\bfx_{k \Delta t}$: the initial model state for the $(k\Delta t) ^{th}$ run of assimilation, which is a vector collecting the values of atmospheric compositions, e.g., PM25, black carbon (BC), sulfate (SU), etc., across all grid locations at a given time-step $t$, e.g., $k = 0$, then $k \Delta t \triangleq t = 0$. 
    The optimized value of this initial state, denoted as $\bfx_{k \Delta t}^A$, is the desired reanalysis output result, and is right the CAMS reanalysis product provided by ECMWF;
    
    \item $\bfx^B_{k \Delta t}$: called background state at the $(k\Delta t) ^{th}$ run of assimilation, and is obtained by applying a numerical weather prediction (NWP) model which is a series product of time operators onto the reanalysis output from the last run of assimilation, e.g., $\bfx^B_{\Delta t} =  \bfM_{\Delta t} \cdot \bfM_{\Delta t - \delta t} \cdots \bfM_{\delta t} \cdot \bfM_{0+\delta t} \bfx_0^A$, and here $\delta t$ is incremental time step within the window $ 0 \sim \Delta t$; 
    
    \item $\textbf{B}_{k \Delta t}$: background-state error covariance matrix containing variance and covariance of background-state errors of different atmospheric compositions across different grid locations for a given assimilation time $t = k \Delta t$; 
    Errors can be systematic (e.g., biases) or random. Biases are usually corrected before the assimilation procedure starts and random errors are assumed to be Gaussian;
    When producing CAMS reanalysis product, this error covariance matrix between different fields is block diagonal meaning each field is assimilated univariately \citep[Section.~2.3]{inness2019cams};
    
    \item $\bfy_{\ttilde}$: observations collected during the time window $\ttilde = k \Delta t \sim (k+1)\Delta t$;
    
    \item $\bfx_{\ttilde}$ : model states during the time window $\ttilde = t \sim t + \Delta t$ obtained by evolving from time $\ttilde = t$ via a NWP model, e.g., if $\ttilde = 0 \sim \Delta t$, then $\bfx_{\Delta t} = \bfM_{\Delta t} \cdot \bfM_{\Delta t - \delta t} \cdots \bfM_{2 \delta t} \cdot \bfM_{0+\delta t} \cdot \bfx_0$; Details of the NWP $\bfM \bfM \cdots \bfM$ can be found in Appendix \ref{app:NWP}.
    
    \item $\bfH_{\ttilde} ^o \cdot \bfx_{\ttilde}$: predicted observations obtained via left multiplying a given model state $\bfx_{\ttilde}$ by an observation location smoothing operator matrix $\bfH_{\ttilde} ^o$ which contains interpolation coefficients for different observations at different locations; Details of the structure of $\bfH_{\ttilde} ^o$ see Appendix \ref{app:H};
    
    \item $\bfR_{\ttilde} $ : observations error covariance matrix during the time window $\ttilde$; 
    
    \item Note: subscript is used to express a discrete time step.
\end{itemize}




So, based on the above, we elaborate below the detailed process of how the ECMWF CAMS reanalysis data product is generated, the procedure of which also provides an excellent opportunity to understand thoroughly the properties of these reanalysis outputs as well as those of their corresponding errors.

In the first run (or time-step), $k = 0$, hence $k\Delta t \triangleq t = 0$, and $\ttilde = 0 \sim \Delta t$, collect observations $\bfy_{\ttilde}$,
\begin{itemize}
    \item background state vector $\bfx^B_0$ at the initial time $t = 0$ is assumed to be a guess;
    \item state $\bfx_0 $ at the initial time $(t = 0)$ is set to be the same as above $\bfx^B_0$;
    \item evolve model state $\bfx_{\ttilde}$ from $\ttilde = t = 0 $ into any required discrete time step within the window $\ttilde = 0 \sim \Delta t$ by $ \bfx_{\ttilde} = \bfM_{\ttilde}\cdot \bfM_{\ttilde - \delta t} \cdots \bfM_{2\delta t} \cdot \bfM_{0+\delta t} \cdot \bfx_0$;
    \item multiply the observation location smoothing operator $\bfH_{\ttilde}^o$ to get the modelled observation $\bfH_{\ttilde} ^o \cdot \bfx_{\ttilde}$; 
\end{itemize}
then by minimizing the cost function $J$ with respect to $\bfx_0$ for this run, we get the first optimized reanalysis output for the initial state of this run and denote it as $\bfx^A _0$ = arg min $J$ to replace the initial guess $\bfx_0$. 

In the second run, $k = 1$, hence $t = \Delta t$, and $\ttilde = \Delta t \sim 2 \Delta t$, collect observations $\bfy_{\ttilde}$,
\begin{itemize}
    \item the initial time of this run is $t = \Delta t$, and so the background state vector is $\bfx^B_{\Delta t}$, which is obtained by applying an NWP model onto the reanalysis output $\bfx_0 ^A$ from the last run, that is  
    $\bfx^B_{\Delta t} = \bfM_{\Delta t} \cdot \bfM_{\Delta t - \delta t} \cdots \bfM_{0+\delta t}\cdot \bfx_0 ^A $;
    \item model state for this run is $\bfx_{\ttilde (= \Delta t \sim 2 \Delta t)} = \bfM_{\ttilde}\cdot \bfM_{\ttilde - \delta t} \cdots \bfM_{\Delta t + 2\delta t} \cdot \bfM_{\Delta t +\delta t} \cdot \bfx_{\Delta t}$; 
    \item multiply the observation location smoothing operator $\bfH_{\ttilde}^o$ to get the modelled observation $\bfH_{\ttilde} ^o \cdot \bfx_{\ttilde}$ for this run; 
\end{itemize}
and minimize the cost function $J$ with respect to model state $\bfx_{\Delta t}$ we then get the second optimized reanalysis output for the initial state of this run and denote it as $\bfx^A_{\Delta t}$ = arg min $J$.



Following the same logic, we can get a collection of the reanalysis outputs $ \{ \bfx^A _0, \bfx^A_{\Delta t}, \bfx^A_{2\Delta t}, \ldots   \}$.

Illustrative derivation of the cost function $J$ for the first two runs as well as their corresponding first derivative expressions (for arg min J) are detailed in Appendix \ref{app:J&DJ}, especially in equation \eqref{nablaJ}. In general, the reanalysis output $\bfx_t \triangleq \bfx_t^A$ at time $t$ is in the form of 
\begin{align}
    \label{eq:bfx}
    \bfx_t \triangleq \bfx_t^A \propto L \bfx_t^B + \sum_{\ttilde = t}^{t + \Delta t} K_{\ttilde} \bfy_{\ttilde}, 
\end{align}
where $L$ and $K_{\ttilde}$ are two coefficient matrices consisting of $\bfH \bfM \cdots \bfM$, and in particular, the number of terms of $\bfM$ involved in $K_{\ttilde}$ depends on $\ttilde$. Derivation details see equation \eqref{eq:bfx_tA} and \eqref{eq:bfx_tA_compact} in Appendix \ref{app:J&DJ}.

\section{Conjecture of the Existence of Stochasticity in the Reanalysis Outputs}
In this section, we lay down our conjecture about the existence of stochasticity or randomness in
the above reanalysis outputs $ \{ \bfx^A _0, \bfx^A_{\Delta t}, \bfx^A_{2\Delta t}, \ldots  \}$, or equivalently the ECMWF CAMS reanalysis data. 

To start with, we first briefly review some basic concepts relating to stochastic processes. We follow the conventions in \citet[p.~360]{grimmett2001probability} and \citet[p.~3-5]{cosmaadvprob2}.

\subsection{Concepts of Stochastic Processes}
\label{stoch-proc}
A \textit{stochastic process} $\bfX$ is a collection or a family $\{ \bfX_t : t \in T \}$ of random variables $X_t$'s or generally random objects \footnote{When the $sigma$-algebra of the output space is in $\mbbR^1$, it's a random variable; when the $sigma$-algebra of the output space is in $\mbbR^n$, it's a random vector, and when the $sigma$-algebra of the output space is a sequence, it's a stochastic (random) process.} $\bfX_t$'s, each of which maps a sample space $\Omega$ into a state space $U$, that is $X_t : \Omega \rightarrow U$ or $\bfX_t : \Omega \rightarrow U$.

The index set $T$ can be discrete e.g. $\{0, 1, 2, \ldots \}$ or continuous e.g. $[0, \infty)$, and the state space $U$ can be integer $\mathbb{Z}$ or real number $\mathbb{R}$. Together, the choice of the index set $T$ and state space $U$ decides the analytic property of stochastic processes. 

For any fixed sample element $\omega \in \Omega$, there's a corresponding collection $\{ \bfX_t (\omega) : t \in T \} \subset U$, which is called  \textit{one realization} or \textit{a sample path} of the stochastic process $\bfX$ at $\omega$. 

When the length of the index set $T$ is 1, that is the index set $T$ has only one element, then the stochastic process degenerates into a \textit{trivial stochastic process}, or equivalently a random variable.

\subsection{Stochasticity-Existence Conjecture}
\label{exam-stoch}

Now come back to the collection of reanalysis outputs $ \{ \bfx^A _0, \bfx^A_{\Delta t}, \bfx^A_{2\Delta t}, \ldots   \}$ from the 4D-Var DA, and from the equation \eqref{J_fun} and \eqref{eq:bfx}, we know the reanalysis output $\bfx_t^A$ at each run $t$ is essentially a function of background state $\bfx_t^B$ at time $t$ and observations $\bfy_{\ttilde}$ collected from $t$ to $t + \Delta t$. 
That is $\bfx^A _t = $ arg min $J$ = $\mfkG(\bfx^B_t, \bfy_{\ttilde})$. 

Although the background state $\bfx_t^B$, obtained from the reanalysis output of the last run, whose randomness is still to be proved (except the initial run in which $\bfx_t^B$ is set to a guess) and hence can be treated as a constant for the moment (more discussion on this see Section \ref{discussion}), the observations $\bfy_{\ttilde}$ collected from $t$ to $t + \Delta t$ are random in nature, since they are usually viewed as one sample from a hidden spatial and/or temporal process, meanwhile the observations also contain random errors, e.g., measurement error etc. This means, on one hand, $\bfx^A _t = $ arg min $J$ = $\mfkG(\bfx^B_t, \bfy_{\ttilde})$ can be further simplified to $\bfx_t^A = \mcalG(\bfy_{\ttilde})$, on the other, the observations $\bfy_{\ttilde}$ fed into the cost function J or equation \eqref{eq:bfx} are just sample elements $\omega_{\bfy_{\ttilde}} \triangleq \bfy_{\ttilde}$ from their sample space $\Omega_{\bfy_{\ttilde}}$ ($\ttilde = t \sim (t + \Delta t)$), and therefore, we could write $\bfx_t^A = \mcalG(\bfy_{\ttilde})$ in a more general way which reflects the random nature of $\bfy_{\ttilde}$ as 
\begin{align}
    \label{eq:G}
    \bfX^A_t = arg\: min\: J = \mcalG(\bfY_{\ttilde})
\end{align}
almost surely, where here the upper cases denote random variables while the lower cases denote their corresponding realisations. And by equation \eqref{eq:bfx}, we know equation
\eqref{eq:G} can be further written out in a more explicit form as 
\begin{align}
    \bfX^A_t = arg\: min\: J = \mcalG(\bfY_{\ttilde}) \propto C_t + \sum_{\ttilde = t} ^{t + \Delta t}K_{\ttilde} \bfY_{\ttilde}, 
\end{align}
where $C_t$ denotes a constant vector and $K_{\ttilde}$ is the coefficient matrix consisting of $\bfH \bfM \ldots \bfM $.
In this way, we obtain what will be referred to as \textit{the general form of the reanalysis output} $\bfX_t^A$, where $t = 0, \Delta t, 2 \Delta t, \ldots$.

Consequently, by the concepts introduced in Section \ref{stoch-proc}, 
we know that each run of the reanalysis output 
$\bfx^A _t = \bfX^A _t (\omega_{\bfy_{\ttilde}})$ is one realisation from a trivial stochastic process $\bfX^A _t$ ($t = 0, \Delta t, 2\Delta t, \ldots$). 
Hence, collectively, although we can not confer the collection of the reanalysis outputs $\{ \bfX^A _0 (\omega_{\bfy_{\ttilde (= 0 \sim \Delta t)}}),
\bfX^A _{\Delta t} (\omega_{\bfy_{\ttilde (= \Delta t \sim 2 \Delta t)}}), \bfX^A _{2 \Delta t} (\omega_{\bfy_{\ttilde (= 2 \Delta t \sim 3 \Delta t)}}), \ldots \}$
directly as one realisation of a temporally evolved stochastic process, which would require a fixed element in the sample space (e.g. $\omega_{\bfy_{\ttilde (= 0 \sim \Delta t)}}$) instead of sample elements that change along the time index (i.e. $\omega_{\bfy_{\ttilde (= 0 \sim \Delta t)}}, \omega_{\bfy_{\ttilde (= \Delta t \sim 2 \Delta t)}}, \ldots$)
across all the temporally indexed
random variables $\bfX^A_t$ ($t = 0, \Delta t, 2\Delta t, \ldots$),
we can at least have a collection of different realisations from each trivial stochastic process. 

And from here, we may be able to further explore the possibility of the equivalence of this collection of different realisations from each trivial stochastic process and one realisation or a sample path from a temporal stochastic process, which is more commonly seen in the spatio-temporal stochastic modelling realm.

And if we just focus on one reanalysis output at a given time $t$, 
$\bfx^A_t \equiv \bfX^A_t (\omega_{\bfy_{\ttilde (= t \sim (t+\Delta t))}})$, 
from the mechanism of 4D-Var DA in Section \ref{mechanism},
we know that this reanalysis output is a vector collecting all the atmospheric field values
across all the spatial grid locations within a potentially infinite yet practically finite domain $\{s_1, s_2, s_3, \ldots, s_N \} \subset \{s_1, s_2, s_3, \ldots \}$.
That is at a given time $t$, we have a spatially evolved collection 
$\{ \bfX^A _{s_1} (\omega_{\bfy_{\ttilde}}; t), 
\bfX^A _{s_2} (\omega_{\bfy_{\ttilde}}; t),
\ldots \bfX^A _{s_N} (\omega_{\bfy_{\ttilde}}; t) \} $, 
which is a sub-collection of
$\{ \bfX^A _{s_1} (\omega_{\bfy_{\ttilde}}; t), 
\bfX^A _{s_2} (\omega_{\bfy_{\ttilde}}; t),
\ldots \} $, $\ttilde = t \sim (t + \Delta t)$.
Such a collection of reanalysis outputs evolving across different spatial indices
at a given time looks very similar to one realisation from a \textit{spatial} stochastic process at that given time $t$.


\section{Proofs of Existence of Stochasticity}

Thus far, our analyzing work is mainly based on the concepts relating to the stochastic process. To rigorously demonstrate the existence of stochasticity associated with the reanalysis outputs of 4D-Var DA (i.e., the ECMWF CAMS reanalysis data), we resort to measure theory.

In measure theory, stochastic processes can be constructed via two paths: one is through a collection of random variables (or random objects) defined on a common probability space, and the other is via an abstract dynamical system \citep[p.~6-7]{gray2009probability}. We now prove the existence of stochasticity associated with our reanalysis outputs from each of these two perspectives.

\subsection{Perspective 1: A Sequence of Random Variables}
\label{MT-seqrdv}
Section \ref{stoch-proc} states that a stochastic process is a collection of random objects (random variables or random vectors, or stochastic processes). But we need to rethink what a \textit{random object} really is and whether \textit{the general form of the reanalysis output} $\bfX^A_t$ $(t = 0, \Delta t, 2 \Delta t, \ldots)$ we defined in Section \ref{exam-stoch} indeed matches the concept of a \textit{random object}. We first lay down the basic theories and then examinations and proofs follow. 

By the definition in \citet[p.~4]{gray2009probability}, \citet[p.~182-186]{billingsley1995probability}, and \citet[p.~192]{athreya2006measure}, 
given two measurable spaces $(\Omega, \mcalA)$ and $(\mbbR^n, \mcalB(\mbbR^n))$, where $\Omega$ denotes a sample space, $\mcalA$ is a $\sigma$-algebra on $\Omega$, $\mbbR^n$ is a topological space, and $\mcalB(\mbbR^n)$ is a Borel $\sigma$-algebra of subsets of $\mbbR^n$, a function $f: \Omega \rightarrow \mbbR^n$ is 
a \textit{random vector} or its one-dimensional special case \textit{random variable} (n = 1) if it is a \textit{measurable function} on $(\Omega, \mcalA)$, that is 
\begin{align*}
    f^{-1}(B) = \cap_{i = 1}^{n} \{\omega: f_i(\omega) \in B  \} \in \mcalA, \; \forall B \in \mcalB(\mbbR^n), 
\end{align*}
and if $\mu$ is a measure on $\mcalA$, that is if $(\Omega, \mcalA, \mu)$ is a measure space, then there exists an \textit{induced measure} $\mu f^{-1}$ on $\mcalB(\mbbR^n)$ such that
\begin{align*}
    \mu f^{-1} (B) = \mu (f^{-1} (B)) = \mu (\cap_{i = 1}^{n} \{\omega: f_i(\omega) \in B  \}), \; \forall B \in \mcalB(\mbbR^n)
\end{align*}
which is induced by the random object $f$. This induced measure is called \textit{(joint) cumulative density function} for the random object $f$, which is itself non-decreasing, right-continuous, hence is a Lebesgue-Stieltjes measure. 

And if $f: \mbbR^i \rightarrow \mbbR^k$ mapping between two topological spaces is continuous, then $f$ is \textit{Borel} measurable, see the Theorem 3.2 in \citet[p.~183]{billingsley1995probability}.

By the analysis in Section \ref{exam-stoch}, we know $\bfX_t^A = \mcalG(\bfY_{\ttilde}) \propto C_t + \sum_{\ttilde = t} ^{t + \Delta t}K_{\ttilde} \bfY_{\ttilde} $, where $\bfY_{\ttilde}$ is an $n \times 1$ vector of observational variables, and here $n$ is the number of grid locations $N$ times the number of atmospheric compositions $P$, $K_{\ttilde}$ is an $n \times n$ scalar matrix consisting of $\bfH \bfM \ldots \bfM$, in which the number of terms of $\bfM$ involved depends on index $\ttilde$,
and $C_t$ is an $n \times 1$ vector of constants, for
detailed derivation, see Appendix \ref{app:J&DJ}. So,
to verify whether the general form of the reanalysis output $\bfX^A_t$ $(t = 0, \Delta t, 2 \Delta t, \ldots)$ is a random object or not, we just need to prove that the function $\mcalG(\cdot)$ is a measurable one. 

\begin{lemma}
\label{lemma 4.1}
The general form of the reanalysis outputs $\bfX^A_t (t = 0, \Delta t, 2\Delta t, \ldots)$ from 4D-Var DA is a random object. 
\end{lemma}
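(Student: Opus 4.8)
The plan is to reduce the claim to the measurability of the deterministic map $\mcalG$ and then build the random object by composition. Since $\bfX_t^A = \mcalG(\bfY_{\ttilde})$ with $\mcalG(\mathbf{y}) \propto C_t + \sum_{\ttilde = t}^{t+\Delta t} K_{\ttilde}\, \mathbf{y}_{\ttilde}$, I would first regard $\bfY_{\ttilde}$ as a bona fide random vector, i.e.\ a measurable function $\bfY_{\ttilde}: (\Omega, \mcalA) \to (\mbbR^n, \mcalB(\mbbR^n))$; this is exactly the standing assumption that the assimilated observations are realisations of hidden spatio-temporal processes contaminated by random errors. The task then collapses, as already noted just before the statement, to showing that $\mcalG: \mbbR^n \to \mbbR^n$ is itself Borel measurable, after which $\bfX_t^A = \mcalG \circ \bfY_{\ttilde}$ inherits measurability from the standard fact that a composition of measurable maps is measurable.

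To show $\mcalG$ is measurable I would exploit its algebraic form rather than verify preimages directly. Stacking the windowed observation blocks $\bfY_{\ttilde}$ into a single vector, $\mcalG$ is an affine map $\mathbf{y} \mapsto C_t + K\mathbf{y}$, where $C_t$ is a fixed vector and $K$ is the fixed block matrix assembled from the coefficient matrices $K_{\ttilde}$ (each built from products $\bfH\bfM\cdots\bfM$). Every such affine map on a finite-dimensional Euclidean space is continuous, so by the continuity-implies-Borel-measurability theorem cited above, $\mcalG$ is Borel measurable. Combining this with the composition fact from the first step yields that $\bfX_t^A$ is a measurable function from $(\Omega, \mcalA)$ into $(\mbbR^n, \mcalB(\mbbR^n))$, hence a random object in the sense of the definition reproduced above; as a by-product, the induced (joint) law of $\bfX_t^A$ is the pushforward of the observation distribution through $\mcalG$.

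The delicate point, and the step I would be most careful about, is justifying that the coefficient matrices $C_t$ and $K_{\ttilde}$ are genuinely deterministic, since continuity (and hence the whole argument) relies on treating $K$ as a constant linear operator. Here I would appeal to the mechanism of Section \ref{mechanism}: the observation operators $\bfH_{\ttilde}^o$, the forecast operators $\bfM$, and the prescribed error covariances $\bfB, \bfR$ are all fixed model ingredients, and the background state $\bfx_t^B$ is, by the convention adopted in Section \ref{exam-stoch}, held constant for the current run, so the only source of randomness entering $\bfX_t^A$ is $\bfY_{\ttilde}$. A secondary remark worth a sentence is that continuity is insensitive to the exact (possibly large) dimension $n = NP$ and to the number of $\bfM$-factors inside each $K_{\ttilde}$, since only their finiteness is needed for the affine map to be continuous. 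No deeper obstruction arises: once determinism of the coefficients is secured, measurability is immediate.
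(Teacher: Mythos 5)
Your proof is correct and takes essentially the same route as the paper's: both reduce the claim to Borel measurability of $\mcalG$ and get it from the fact that a linear/affine map on $\mbbR^n$ is continuous, hence Borel measurable (the same continuity-implies-measurability theorem), after which $\bfX_t^A$ is a random object. The only cosmetic differences are that you stack the windowed observations into one vector and treat $\mcalG$ as a single affine map, whereas the paper handles each term $K_{\ttilde}\bfY_{\ttilde}$ separately and invokes closure of measurable functions under addition and scalar operations; your explicit composition step $\bfX_t^A = \mcalG \circ \bfY_{\ttilde}$ and your remark on the determinism of $C_t$ and $K_{\ttilde}$ make the same argument slightly more careful but not different in substance.
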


\begin{proof}
We first denote $\bfZ_{\ttilde} = K_{\ttilde} \bfY_{\ttilde} = G_{\ttilde}(\bfY_{\ttilde}) $, where $\ttilde \in [t, (t+\Delta t)]$ and $G_{\ttilde}(\cdot)$ is a function,
and so $\bfX_t^A = \mcalG(\bfY_{\ttilde}) \propto C_t + \sum_{\ttilde = t} ^{t+\Delta  t}K_{\ttilde} \bfY_{\ttilde} = C_t + \sum_{\ttilde = t} ^{t+\Delta t} G_{\ttilde}(\bfY_{\ttilde})$. Since measurable functions are closed under addition, scalar translation and scalar multiplication, so, to prove $\bfX_t^A = \mcalG(\cdot)$ is measurable we just need to prove $\bfZ_{\ttilde} = G_{\ttilde}(Y_{\ttilde})$ for any one index $\ttilde \in [t, t+\Delta t]$ is measurable. Without loss of generality, we choose $\ttilde = t$.

And by the definition of $\bfY_{\ttilde}$ and $\bfZ_{\ttilde}$, we know $G_{\ttilde = t}(\cdot)$ is a function mapping between two topological spaces, i.e., $G_{\ttilde = t}: \mbbR ^n \rightarrow \mbbR^n$, with each of the topological spaces being equipped with \textit{Borel} \sigalg \: $\mcalB (\mbbR^n)$ generated by the Cartesian product of $n$ open intervals, and in particular, the $\sigma$-algebra $\mcalB (\mbbR^n)$ on the output space is $\mcalB (\mbbR^n) = \sigma \langle (-\infty, b_1)\times \ldots \times (-\infty, b_n) \rangle$, $(b_i: i = 1, \ldots n) \in \mbbR^n$. And since $G_{\ttilde = t}(\bfY_{\ttilde = t}) = K_{t} \bfY_{t}$ is a linear map, hence is continuous, therefore
it is \textit{Borel} measurable, by Theorem 3.2 in \citet[p.~183]{billingsley1995probability}. This means 
\begin{align*}
    G_{\ttilde = t}^{-1} ((-\infty, b_1)\times \ldots \times (-\infty, b_n)) = \cap_{i = 1} ^n \{ \omega_{y_t}: [\ G_{\ttilde = t} (\omega_{y_t}) ]\ _i \in (- \infty, b_i) \}
\end{align*}
is \textit{Borel} measurable. Hence $\bfX_t^A = \mcalG (\bfY_{\ttilde}) \propto C_t + \sum_{\ttilde = t} ^{t+\Delta  t}K_{\ttilde} \bfY_{\ttilde} = C_t + \sum_{\ttilde = t} ^{t+\Delta t} G_{\ttilde}(Y_{\ttilde}) $ is \textit{Borel} measurable by each $G_{\ttilde}(Y_{\ttilde})$, $\ttilde \in [t, t+\Delta t]$ is \textit{Borel} measurable. $\bfX_t^A$ is a random object. 

\end{proof}

From the above proof, we know either $\ttilde$ being just one index (e.g., $\ttilde = t$) or spanning across an interval (e.g., $t \sim t+\Delta t$) does not change the measurability of $\mcalG$ and hence the randomness of $\bfX_t^A$, so it's notationally clear and convenient to just choose one index for $\ttilde$, and without loss of generality, we set $\ttilde = t$, so $\bfX_t^A = \mcalG(\bfY_{\ttilde = t})= G_{\ttilde = t}(\bfY_{\ttilde = t}) + C_t$, and denote the sample space for $\bfY_t$ as
 $\Omega$ with equipped \textit{Borel} $\sigma$-algebra $\mcalA$, and one
sample element $\omega_{\bfy_{\ttilde = t}}$ of $\bfY_{\ttilde = t} $ will be denoted as $\omega_t$ for convenience, so 
in the following article, we work with 
\begin{align*}
    \bfX_t^A = \mcalG(\bfY_t), 
\end{align*}
where $\mcalG : (\Omega, \mcalA) \rightarrow (\mbbR^n, \mcalB(\mbbR^n))$ with 
the corresponding realisation of $\bfX_t^A$ being $\bfx_t^A = \bfX_t^A (\omega_t) = \mcalG(\omega_t)$. 

And if there is a probability measure $P$ on $(\Omega, \mcalA)$, there must be an induced probability measure $P\mcalG^{-1}$ on $(\mbbR^n, \mcalB(\mbbR^n))$ induced by the measurable function $\mcalG$.

\begin{lemma}
\label{lemma 4.2}
The collection of the general form of the reanalysis outputs $\{ \bfX^A_t : t = 0, \Delta t, 2\Delta t, \ldots \}$ from 4D-Var DA is a single-sided temporal stochastic process.
\end{lemma}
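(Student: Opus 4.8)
The plan is to check the collection $\{\bfX^A_t : t = 0, \Delta t, 2\Delta t, \ldots\}$ against the measure-theoretic definition of a stochastic process recalled in Section~\ref{stoch-proc}: a family of random objects, indexed by a set $T$, all defined on a single common probability space. Lemma~\ref{lemma 4.1} already delivers half of this, since it shows each $\bfX^A_t = \mcalG(\bfY_t)$ is a random object. What remains is therefore twofold: (i) to exhibit one common probability space carrying every member of the family, and (ii) to identify the index set $T = \{0, \Delta t, 2\Delta t, \ldots\}$ as discrete and one-sided, which is exactly the content of the descriptor \emph{single-sided temporal}.

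First I would confront the obstruction noted in Section~\ref{exam-stoch}: as stated, the $k$-th output is built from the observations $\bfY_{k\Delta t}$ gathered over the window $[k\Delta t, (k+1)\Delta t]$, and a priori these live on window-dependent sample spaces $(\Omega_{k\Delta t}, \mcalA_{k\Delta t})$. To reconcile them, I would form the countable product space $\Omega = \prod_{k=0}^{\infty} \Omega_{k\Delta t}$ with the product $\sigma$-algebra $\mcalA = \bigotimes_{k=0}^{\infty} \mcalA_{k\Delta t}$, and lift each output to $\Omega$ through the coordinate projection $\pi_{k\Delta t} : \Omega \to \Omega_{k\Delta t}$ by setting $\widetilde{\bfX}^A_{k\Delta t} = \bfX^A_{k\Delta t} \circ \pi_{k\Delta t}$. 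Each projection is $\mcalA/\mcalA_{k\Delta t}$-measurable, and each $\bfX^A_{k\Delta t}$ is measurable by Lemma~\ref{lemma 4.1}, so every composite $\widetilde{\bfX}^A_{k\Delta t}$ is $\mcalA/\mcalB(\mbbR^n)$-measurable; the family then consists of genuine random objects on the one space $(\Omega, \mcalA)$.

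The hard part will be equipping $(\Omega, \mcalA)$ with a probability measure $P$ whose marginals reproduce the induced law $P_{k\Delta t}\mcalG^{-1}$ of each individual run. If the successive observation windows may be regarded as independent, this is immediate, since the product measure $P = \bigotimes_{k=0}^{\infty} P_{k\Delta t}$ on a countable product always exists and returns each marginal. In the realistic, dependent case — consecutive runs share information through the forward-carried background state $\bfx^B_{\cdot}$ — I would instead invoke the Kolmogorov extension (consistency) theorem, specifying the finite-dimensional distributions of $(\widetilde{\bfX}^A_0, \ldots, \widetilde{\bfX}^A_{K\Delta t})$ and verifying their symmetry and marginalisation compatibility to obtain a unique such $P$. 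Checking these compatibility conditions is the one genuinely non-routine step; equivalently, one may simply take $P$ to be the law of the underlying real-world observational process, which by hypothesis is already a spatio-temporal stochastic process on a common space and hence furnishes $(\Omega, \mcalA, P)$ directly.

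With $P$ fixed, $\{\widetilde{\bfX}^A_t : t = 0, \Delta t, 2\Delta t, \ldots\}$ is a family of random objects on the common probability space $(\Omega, \mcalA, P)$, matching the definition in Section~\ref{stoch-proc}. To finish I would observe that the index set $T = \{0, \Delta t, 2\Delta t, \ldots\} \cong \{0, 1, 2, \ldots\}$ is discrete, bounded below at $0$ and unbounded above — a one-sided sequence rather than a two-sided one indexed by $\mbbZ$ — which is precisely what \emph{single-sided} encodes, while the indexing by time makes the process \emph{temporal}. Hence the collection is a single-sided temporal stochastic process, as claimed.
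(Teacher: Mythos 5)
Your proposal is correct, but it takes a genuinely different and more constructive route than the paper's own proof, which is essentially a one-liner. Having fixed $\ttilde = t$ after Lemma~\ref{lemma 4.1}, the paper identifies the sample space of \emph{every} $\bfY_t$ with the single canonical space $(\Omega, \mcalA) = (\mbbR^n, \mcalB(\mbbR^n))$ --- possible because every window's observation vector has the same dimension $n$ --- so each $\bfX^A_t = \mcalG(\bfY_t)$ is automatically a measurable function on one common space, and the probability measure $P$ on that space is simply posited (``if there is a probability measure $P$ on $(\Omega,\mcalA)$\ldots''), not constructed. You instead keep the window-dependent spaces $(\Omega_{k\Delta t}, \mcalA_{k\Delta t})$ distinct, pass to the countable product, lift each output along the coordinate projections to $\widetilde{\bfX}^A_{k\Delta t} = \bfX^A_{k\Delta t} \circ \pi_{k\Delta t}$, and then build the joint law either as a product measure (independent windows) or via Kolmogorov extension or the law of the underlying real-world observational process (dependent windows). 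What your approach buys: it confronts head-on the two points the paper glosses over, namely that the runs a priori live on different sample spaces, and that calling the family a stochastic process requires an actual joint measure whose existence must be argued rather than assumed; moreover, your product space is structurally the sequence space on which the shift $T^{\Delta t}$ of Theorem~\ref{cams-output} acts, so your construction dovetails naturally with the paper's later dynamical-system argument in Section~\ref{dynamicsys}. What the paper's identification buys: brevity, no replacement of $\bfX^A_t$ by lifted copies, and a single map $\mcalG$ serving all runs --- at the cost of leaving the joint distribution across runs implicit. One caveat: your Kolmogorov consistency check is stated as a plan rather than carried out, but your fallback of taking $P$ to be the law of the jointly defined real-world observational process closes that gap acceptably.
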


\begin{proof}
    By Lemma \ref{lemma 4.1}, each $\bfX^A_t$ is a random object defined on a common probability space $(\Omega, \mcalA, P)$, and index $t$ spans across different positive time-steps from 0. 
\end{proof}

Such a temporal stochastic process is essentially a digitised version of a real-world hidden temporal stochastic process.
By the relationship $\bfx^A_t = \bfX^A_t(\omega_t)$, we know the collection of each run of our reanalysis output $\{\bfx^A_0, \bfx^A_{\Delta t}, \bfx^A_{2 \Delta t}, \ldots \} 
= \{ \bfX^A_0(\omega_0), \bfX^A_{\Delta t}(\omega_{\Delta t}), \bfX^A_{2\Delta t}(\omega_{2\Delta t}), \ldots \}$ 
is a collection of individual realisations from each trivial stochastic process or every single random variable. 
In the Theorem \ref{cams-output} of Section \ref{dynamicsys}, we will reveal its equivalence to one realisation or a sample path of the temporal stochastic process.

Further, at a given time $t$, by the definition of $\bfx^A_t$ (or $\bfx_{k \Delta t}$) introduced in Section \ref{mechanism}, which is a vector collecting all the values of atmospheric fields across all grid locations,
we can further expand the random object $\bfX_t ^A$ at the given time $t$ into a ``deeper" collection according to a spatial index set $\mcalS$, 
that is $\{ \bfX_{t, s_1}^A, \bfX_{t, s_2}^A, \bfX_{t, s_3}^A, \ldots \}$,
where here the spatial index set $\{s_1, s_2, \ldots\}$ is theoretically infinite while practically finite with each $s_i \in \mbbR^d$. 
And since $\bfX_t^A$ is a measurable function of $\bfY_{t}$, 
it's trivial to show that $\bfX^A_{t,s_i}$ (\,$i = 1, 2, \ldots$ \,) is also a measurable function of $\bfY_{t}$, 
hence the collection of these random vectors $\{ \bfX^A_{s_i}(t)$, $i = 1, 2, \ldots \}$ with the evolutionary spatial indices at a given time $t$ 
is a \textit{d-dimensional spatially-discrete random field} 
or just simply a \textit{spatial random field} or a \textit{spatial stochastic process} \citep[p.~4]{cosmaadvprob2}. One for each atmospheric composition. Here moving the time index $t$ into a bracket is for a clear observation of the evolution of spatial indices.

\begin{lemma}
\label{lemma 4.3}
At a given time $t$, the collection of the general form of the reanalysis outputs $\{ \bfX^A_{s_i}(t)$, $i = 1, 2, \ldots \}$ of 4D-Var DA is a spatial stochastic process.
\end{lemma}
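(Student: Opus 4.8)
The plan is to leverage Lemma \ref{lemma 4.1} together with the stability of measurability under composition, mirroring the argument used for Lemma \ref{lemma 4.2} but now running along the spatial index set $\mcalS$ rather than the temporal one. Since $\bfX_t^A = \mcalG(\bfY_t)$ has already been established as a random object (a measurable map into $(\mbbR^n, \mcalB(\mbbR^n))$ with $n = N \times P$), each spatial component $\bfX^A_{s_i}(t)$ is obtained by selecting those coordinates of the state vector $\bfX_t^A$ that correspond to the grid location $s_i$. The core of the proof is to argue that this coordinate selection is itself measurable, so that its composition with $\mcalG$ remains measurable.

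First I would make the selection map explicit. For each $s_i \in \mcalS \subset \mbbR^d$, let $\pi_{s_i}: \mbbR^n \rightarrow \mbbR^P$ be the linear projection extracting the $P$ entries of a state vector belonging to location $s_i$ (or $\pi_{s_i}: \mbbR^n \rightarrow \mbbR$ when a single atmospheric composition is fixed, yielding one field per composition as the text notes). Being linear on finite-dimensional spaces, $\pi_{s_i}$ is continuous, hence \textit{Borel} measurable by Theorem 3.2 in \citet[p.~183]{billingsley1995probability}, the same result invoked in Lemma \ref{lemma 4.1}. Since measurable functions are closed under composition, $\bfX^A_{s_i}(t) = \pi_{s_i} \circ \mcalG$ is a measurable map from $(\Omega, \mcalA)$ into its image space, i.e. a random object in the sense of Section \ref{MT-seqrdv}.

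Next I would tie the family together. Because every $\pi_{s_i}$ composes with the one fixed map $\mcalG$, all these random objects share the common probability space $(\Omega, \mcalA, P)$ on which $\bfY_t$ is defined. Invoking the definition of a spatial random field, namely a family of random objects on a common probability space indexed by a spatial index set in $\mbbR^d$, I would conclude that $\{ \bfX^A_{s_i}(t) : i = 1, 2, \ldots \}$ is a spatial stochastic process, one for each atmospheric composition.

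The argument is essentially routine once Lemma \ref{lemma 4.1} is in hand, so there is no deep technical obstacle; the only point requiring care is conceptual rather than computational. Concretely, I must confirm that the object qualifies as a \emph{genuine} spatial process under the paper's definition, which means verifying that with $t$ held fixed the spatial indices $\{s_1, s_2, \ldots\} \subset \mbbR^d$ constitute the operative index set, and that it is precisely the shared sample space of $\bfY_t$ that binds the location-wise random objects into a single field rather than a loose collection of unrelated random variables.
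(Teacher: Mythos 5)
Your proposal is correct and follows essentially the same route as the paper's own proof: invoke Lemma \ref{lemma 4.1}, observe that each spatially indexed component of $\bfX_t^A$ is again a measurable function of $\bfY_t$, and conclude via the common probability space and the definition of a spatial stochastic process. The only difference is that you spell out the step the paper dismisses as trivial, by exhibiting the coordinate projection $\pi_{s_i}$ as a continuous (hence Borel measurable) map and composing it with $\mcalG$, which is a welcome but not substantively different elaboration.
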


\begin{proof}
By Lemma \ref{lemma 4.1} and the definition of stochastic process. Details see appendix \ref{app:prooflemma4.3}.
\end{proof}

Such a spatial stochastic process is essentially a digitised version of a real-world hidden spatial process. 
When a finite collection of spatial index set $\{s_1, s_2, \ldots, s_N \} \subset \{s_1, s_2, \ldots \} \subset \mcalS$ is selected, 
then $\{ \bfX^A_{s_1}(t), \bfX^A_{s_2}(t), \ldots, \bfX^A_{s_N}(t) \}$ is a sub-collection of the spatial stochastic process. 

And notice at the given time $t$, the sample element $\omega_t \in \Omega$ is then fixed, 
follow the definition of \textit{one realisation} 
of a stochastic process in Section \ref{stoch-proc}, we understand that our reanalysis outputs at this time
$\{\bfx^A_{s_1}(t), \bfx^A_{s_2}(t), \bfx^A_{s_3}(t), \ldots \} \equiv \{ \bfX^A_{s_1}(\omega_t; t), \bfX^A_{s_2}(\omega_t; t), \bfX^A_{s_3}(\omega_t; t)\dots \}$ is \textit{de facto} one realisation 
of the spatial stochastic process. 

Therefore we have arrived at an important result connecting the reanalysis outputs and the realisations of the spatial stochastic process, and we summarize it into below Theorem \ref{sp-stochastic_pro}:

\begin{theorem}
\label{sp-stochastic_pro}
At a given time $t$, the reanalysis outputs $\bfx^A_t = \{\bfx^A_{s_1}(t), \bfx^A_{s_2}(t), \bfx^A_{s_3}(t), \ldots \}$ from 4D-Var DA (i.e., the ECMWF CAMS reanalysis data) is one realisation from a spatial stochastic process 
$\{ \bfX^A_{s_1}(t), \bfX^A_{s_2}(t), \bfX^A_{s_3}(t) \ldots  \}$.
\end{theorem}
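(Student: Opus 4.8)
The plan is to obtain the statement as an almost-immediate consequence of Lemma \ref{lemma 4.3} together with the definition of \emph{one realisation} (sample path) recorded in Section \ref{stoch-proc}. Lemma \ref{lemma 4.3} already furnishes the object on the right-hand side: the spatial collection $\{\bfX^A_{s_i}(t) : i = 1, 2, \ldots\}$ is a genuine spatial stochastic process, i.e. a family of random objects indexed by the spatial set $\mcalS$, each defined on the common probability space $(\Omega, \mcalA, P)$. What remains is purely to identify the deterministic data vector $\bfx^A_t$ with the evaluation of that family at a single, fixed sample point, and then to invoke the definition of a realisation.

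First I would fix the time $t$ and recall from Section \ref{exam-stoch} and Lemma \ref{lemma 4.1} that the entire observation batch $\bfY_{\ttilde}$, $\ttilde \in [t, t+\Delta t]$, enters the assimilation as a single sample element $\omega_t \in \Omega$, so that the reanalysis output is the value $\bfX^A_t(\omega_t) = \mcalG(\omega_t)$. Next I would pass to the spatial components: since each $\bfX^A_{s_i}(t)$ is, by the argument preceding Lemma \ref{lemma 4.3}, a measurable function of the same vector $\bfY_t$, every spatial coordinate is evaluated at the \emph{same} $\omega_t$. Hence $\bfx^A_{s_i}(t) = \bfX^A_{s_i}(\omega_t; t)$ holds for all $i$ simultaneously, and the collection $\{\bfx^A_{s_1}(t), \bfx^A_{s_2}(t), \ldots\}$ equals $\{\bfX^A_{s_1}(\omega_t; t), \bfX^A_{s_2}(\omega_t; t), \ldots\}$. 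Finally I would quote the definition from Section \ref{stoch-proc}: for a fixed $\omega$, the family $\{\bfX_{s}(\omega) : s \in \mcalS\}$ is by definition one realisation of the process; applying this with the spatial index set $\mcalS$ and the fixed point $\omega = \omega_t$ yields exactly the claim.

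The main obstacle --- really the only conceptual content --- is to justify that a \emph{single} sample element $\omega_t$ is shared across every spatial index, because the definition of a realisation requires precisely this: one fixed $\omega$ held constant while the index ranges over the whole set. This is the exact point where the spatial case succeeds whereas the naive temporal reading fails. In the temporal collection $\{\bfX^A_0(\omega_0), \bfX^A_{\Delta t}(\omega_{\Delta t}), \ldots\}$ discussed after Lemma \ref{lemma 4.2}, the sample elements $\omega_0, \omega_{\Delta t}, \ldots$ change with the index and so no common $\omega$ exists; here, by contrast, the time is frozen, so the observation draw $\bfY_t = \omega_t$ is one and the same for all grid locations $s_i$, and every $\bfX^A_{s_i}(t)$ is a deterministic (measurable) transform of that one draw. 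I would therefore make this shared-$\omega_t$ argument the explicit crux, after which the identification with a sample path follows directly from the definitions, completing the proof.
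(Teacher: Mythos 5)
Your proposal is correct and follows essentially the same route as the paper's own proof: invoke Lemma \ref{lemma 4.3} to obtain the spatial stochastic process, observe that at the fixed time $t$ the sample element $\omega_t \equiv \bfy_t$ is one and the same across all spatial indices, and conclude by the definition of \emph{one realisation} from Section \ref{stoch-proc}. Your explicit emphasis on the shared-$\omega_t$ point (and its contrast with the temporal case, where the sample elements vary with the index) is precisely the fact the paper's proof rests on, just stated more prominently.
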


\begin{proof}
By Lemma \ref{lemma 4.3} and the definition of one realisation of a stochastic process. Details see appendix \ref{app:proofthrm4.1}.
\end{proof}



\subsection{Perspective 2: Abstract Dynamical System}
\subsubsection{Temporal Stochastic Process}
\label{dynamicsys}
From \citet[p.~7]{gray2009probability}, an \textit{abstract dynamical system} consists of two ingredients: one is a measure space $(\Omega, \mcalA, P)$ and the other is a measurable and invertible transformation function $T$ defined on this space as $T: \Omega \rightarrow \Omega$. 
Together, the quadruple $(\Omega, \mcalA, P, T)$ consists of an \textit{abstract dynamical system} that reflects the long-term dynamic behaviour of repeated applications of measurable transformation $T$ on the measure space $(\Omega, \mcalA, P)$. And since the composition of a measurable function is also measurable, $T^r$ defined as $T^r \omega = T(T^{r-1} \omega) = T \circo T^{r-1} (\omega)$ is also measurable with respect to the measurable space $(\Omega, \mcalA)$.

To link the \textit{abstract dynamical system} with a stochastic process, we also need a V-valued\footnote{When the output space of a measurable function is not explicitly stated in the context, we denote it as a \textit{V-valued} random variable. \citep[p.~2]{gray2009probability}} measurable function or a random variable $f$ and assume it is defined on the same measurable space $(\Omega, \mcalA)$, then the new measurable function $fT^r : \Omega \rightarrow V$ 
defined as $fT^r(\omega) = f(T^r\omega)$, $\omega\in \Omega $, is obviously a random variable for all $r \in \mbbZ^+$.

This means an \textit{abstract dynamical system} $(\Omega, \mcalA, P, T)$ and a measurable function $f$ together define a one-sided random process $\{X_r: r \in \mbbZ^+ \}$ with each $X_r(\omega) = fT^r (\omega) = f(T^r \omega)$, 
that is the r$^{th}$ realisation value of this random process (i.e., $X_r(\omega)$) is the value of the measurable function $f$ evaluated at an $r$-unit transformed point in the original sample space $\Omega$ (i.e., $f(T^r \omega)$). This will be a critical property for us to equate one realisation from a temporal stochastic process to a collection of different realisations from every single random variable or trivial stochastic process, hence bringing forward the conclusion about the temporal stochastic process we arrived at in Section \ref{MT-seqrdv}.

When $\Omega$ contains a sequence $\{\omega_r: r \in \mbbZ^+ \}$, $T^l$ shifts the sequence $\{\omega_r: r \in \mbbZ^+ \}$ to the sequence $\{\omega_{r+l}: r \in \mbbZ^+ \}$ where each coordinate is shifted to the left by $l$ time units. 

And if the measurable space $(\Omega, \mcalA) $ is a topological space $ (\mbbR^n , \mcalB(\mbbR^n))$, then Krylov–Bogolyubov theorem ensures on $\mcalB(\mbbR^n)$ there exists an invariant Borel probability measure $\mu$ s.t.  $\mu(T^{-1}(B)) = \mu(B)$, or $T\mu = \mu$ for any $B \in \mcalB(\mbbR^n)$ \citep[p.~4]{sinai1989dynamical}.

Now apply the above theory to our reanalysis outputs and we conclude the relationship between the collection of reanalysis outputs of 4D-Var DA and the realisations of a temporal stochastic process with the following Theorem \ref{cams-output}:
\begin{theorem}
\label{cams-output}
The collection of the reanalysis outputs $\{ \bfx^A_0, \bfx^A_{\Delta t}, \bfx^A_{2 \Delta t}, \ldots \}$ from 4D-Var DA (i.e., the ECMWF CAMS reanalysis data) is indeed one realisation or a sample path of a temporal stochastic process $\{ \bfX_t: t = 0, \Delta t, 2\Delta t, \ldots \}$.
\end{theorem}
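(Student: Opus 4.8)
The plan is to resolve the central subtlety flagged in Section \ref{exam-stoch}: the collection $\{\bfx^A_0, \bfx^A_{\Delta t}, \ldots\} = \{\bfX^A_0(\omega_0), \bfX^A_{\Delta t}(\omega_{\Delta t}), \ldots\}$ is a priori only a list of individual realisations drawn from different trivial stochastic processes, each evaluated at its own sample element $\omega_{r\Delta t}$, whereas a genuine sample path would instead require a single fixed $\omega$ feeding every coordinate. I would close this gap using the abstract dynamical system $(\Omega, \mcalA, P, T)$ of Section \ref{dynamicsys} together with the representation $X_r(\omega) = f(T^r\omega)$, which is precisely engineered to manufacture one sample path from a shift acting on one bundled element.

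First I would take the product sample space $\Omega = \prod_{r=0}^{\infty} \Omega_{r\Delta t}$, where $\Omega_{r\Delta t}$ is the sample space of $\bfY_{r\Delta t}$ from Lemma \ref{lemma 4.1}, equipped with the product $\sigma$-algebra $\mcalA$ and a probability measure $P$; the elements of $\Omega$ are now the sequences $\omega = (\omega_0, \omega_{\Delta t}, \omega_{2\Delta t}, \ldots)$. Next I would take $T$ to be the left-shift, so that $T^r\omega = (\omega_{r\Delta t}, \omega_{(r+1)\Delta t}, \ldots)$, invoking the Krylov–Bogolyubov statement of Section \ref{dynamicsys} to supply an invariant Borel probability measure and thereby complete the quadruple $(\Omega, \mcalA, P, T)$. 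Finally I would define the single coordinate-reading map $f(\omega) = \mcalG(\pi_0(\omega))$, where $\pi_0$ projects onto the zeroth coordinate; since $\pi_0$ is measurable and $\mcalG$ is measurable by Lemma \ref{lemma 4.1}, $f$ is a bona fide $\mbbR^n$-valued random variable on $(\Omega, \mcalA)$.

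With these ingredients, the abstract-dynamical-system machinery of Section \ref{dynamicsys} immediately yields the one-sided process $X_r = fT^r$, each $X_r$ being measurable, so $\{X_r : r \in \mbbZ^+\}$ is the temporal stochastic process $\{\bfX_t : t = 0, \Delta t, 2\Delta t, \ldots\}$ after the reindexing $t = r\Delta t$. The decisive step is to evaluate this process at the single bundled element $\omega^\star = (\omega_0, \omega_{\Delta t}, \omega_{2\Delta t}, \ldots)$ assembled from the actual observation windows used by the assimilation: the shift reads off successive coordinates, giving $X_r(\omega^\star) = f(T^r\omega^\star) = \mcalG(\omega_{r\Delta t}) = \bfX^A_{r\Delta t}(\omega_{r\Delta t}) = \bfx^A_{r\Delta t}$. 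Hence the whole reanalysis collection is recovered as $\{X_r(\omega^\star) : r = 0, 1, 2, \ldots\}$ from the one fixed $\omega^\star$, which is exactly the definition of one realisation / sample path from Section \ref{stoch-proc}.

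The main obstacle I anticipate is not the measurability bookkeeping (which follows from Lemma \ref{lemma 4.1} and the measurability of projections and compositions), but making rigorous the claim that bundling the time-indexed elements into one sequence and shifting is legitimate rather than a sleight of hand. In particular, the transformation $T$ in Section \ref{dynamicsys} is required to be measurable and invertible, yet the one-sided left-shift forgets its first coordinate and so is not injective; I would address this either by embedding into the two-sided sequence space $\prod_{r \in \mbbZ} \Omega_{r\Delta t}$, on which the shift is a genuine automorphism, and then restricting attention to the forward coordinates $r \in \mbbZ^+$, or by noting that invertibility is needed only for the ergodic long-run theory and not for the mere construction of the sample path asserted here. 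A secondary point to handle carefully is that $f$ depends on only the zeroth coordinate while $\mcalG(\bfY_{\ttilde})$ formally involves the whole window $\ttilde \in [t, t+\Delta t]$; the remark following Lemma \ref{lemma 4.1} that collapsing $\ttilde$ to a single index does not affect measurability lets me take $f$ to read a single coordinate without loss.
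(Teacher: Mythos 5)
Your proposal is correct and follows the same route as the paper's own proof: both invoke the abstract dynamical system of Section \ref{dynamicsys}, realise the process as $X_r = \mcalG T^r$, and evaluate it at one fixed sample element so that the shift successively reads off $\omega_0, \omega_{\Delta t}, \omega_{2\Delta t}, \ldots$, which is exactly the chain $\bfx^A_{r\Delta t} = \bfX^A_{r\Delta t}(\omega_{r\Delta t}) = \mcalG(T^{r\Delta t}\omega_0) = \bfX^A_{r\Delta t}(\omega_0)$ appearing in the paper's equations \eqref{eq:3} and \eqref{eq:4}. The differences are in rigour, and they favour you. The paper is ambiguous about what $T$ acts on: its preliminaries define $T$ as a shift on sequences, yet the proof applies $T^{\Delta t}$ to the single element $\omega_0$ and asserts $T^{\Delta t}\omega_0 = \omega_{\Delta t}$ ``by definition of $T$'', silently conflating a point map on $\Omega$ with a sequence shift. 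Your explicit construction --- the product space $\prod_{r} \Omega_{r\Delta t}$, the coordinate projection $\pi_0$, the random variable $f = \mcalG \circ \pi_0$, and the bundled element $\omega^\star$ --- is precisely the scaffolding needed to make that step meaningful, and your evaluation $X_r(\omega^\star) = \mcalG(\omega_{r\Delta t}) = \bfx^A_{r\Delta t}$ legitimately recovers the whole reanalysis sequence from the one fixed $\omega^\star$. You also identify and repair a defect the paper overlooks: its own definition of an abstract dynamical system requires $T$ to be invertible, but the one-sided left shift is not injective; your two-sided embedding (or the observation that invertibility is not needed merely to construct a sample path) closes that hole. Your final remark about collapsing the window index $\ttilde$ to a single coordinate via the discussion following Lemma \ref{lemma 4.1} matches the paper's own convention, so nothing is lost there.
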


\begin{proof}
From the last section \ref{MT-seqrdv}, we know that the general form of reanalysis output is $\bfX_t^A = \mcalG(\bfY_t)$, where $\mcalG$ is a Borel measurable function mapping from $(\Omega, \mcalA)$ to $(\mbbR^n, \mcalB(\mbbR^n))$, and here $(\Omega, \mcalA) = (\mbbR^n, \mcalB(\mbbR^n))$ by the definition of $\bfY_t$ and $\bfX_t^A$. And let $T^{\Delta t}$ shift the coordinates of sequence to the left by $\Delta t$ unit(s), i.e., $T^{\Delta t}(\omega_0, \omega_{\Delta t}, \omega_{2 \Delta t}, \ldots) = (\omega_{\Delta t}, \omega_{2 \Delta t}, \omega_{3 \Delta t} \ldots)$, $T^{2\Delta t}(\omega_0, \omega_{\Delta t}, \omega_{2 \Delta t}, \ldots) = T^{\Delta t}(T^{\Delta t}(\omega_{\Delta t}, \omega_{2 \Delta t}, \omega_{3 \Delta t} \ldots)) = (\omega_{2 \Delta t}, \omega_{3 \Delta t} \ldots))$, and one commonly seen example is $\Delta t = 1$. 

For the first time-step, i.e., $t = 0$, we have $\bfx_0^A = \bfX_0^A(\omega_0)$, and for the second time-step $t = \Delta t$, we have
\begin{equation}
  \begin{aligned}[b]
    \bfx^A_{\Delta t} &= \bfX^A_{\Delta t}(\omega_{\Delta t}) \qquad \mbox{(by definition of one realisation of a random variable (r.d.v.))} \\
    &= \mcalG (\omega_{\Delta t}) \qquad \mbox{(by definition of $\bfX^A_{\Delta t}$ and $\mcalG$ is a measurable function)}\\ 
    &= \mcalG (T^{\Delta t} \omega_0) \qquad \mbox{(by definition of $T$, $T^{\Delta t} \omega_0 = \omega_{\Delta t} $)}\\ 
    &= \mcalG T^{\Delta t} (\omega_0) \qquad \mbox{(by associative property of multiplication)}\\
    &= \bfX_{\Delta t}^A (\omega_0) \qquad \mbox{(by constructing a r.d.v. using dynamical system and a measurable function $\mcalG$)}\\ 
\end{aligned}  
\label{eq:3}
\end{equation}

And similarly, we have 
\begin{align}
    \bfx^A_{2 \Delta t} = \bfX^A_{2 \Delta t}(\omega_{2 \Delta t}) = \mcalG (\omega_{2 \Delta t}) = 
    \mcalG (T^{2 \Delta t} \omega_0) = \mcalG T^{2 \Delta t} (\omega_0)
    = \bfX_{2\Delta t}^A (\omega_0),
    \label{eq:4}
\end{align} 
and 
so on. 

We know that a dynamical system $(\Omega, \mcalA, P, T)$ together with a measurable function $\mcalG$ defines a random variable, so the collection $\{ \bfX_{0}^A,
\bfX^A_{\Delta t}, \bfX^A_{2 \Delta t}, \ldots \}$ is a single-sided temporal stochastic process, and by the definition of \textit{one realisation} or \textit{a sample path} of a stochastic process introduced in Section \ref{stoch-proc}, 
we know for a given fixed $\omega_0$, the collection of the rightmost side of the equations \eqref{eq:3}, \eqref{eq:4}, 
i.e.,   
$\{\bfX^A_{0}(\omega_0), \{\bfX^A_{\Delta t}(\omega_0), \bfX^A_{2 \Delta t}(\omega_0), \ldots\}$ is indeed one realisation or a sample path from this single-sided temporal stochastic process. On the other hand, the collection of the leftmost side of equations \eqref{eq:3}, \eqref{eq:4} 
is our reanalysis outputs data $\{ \bfx_{0}^A, \bfx_{\Delta t}^A, \bfx_{2 \Delta t}^A, \ldots \}$. Therefore, a collection of our reanalysis outputs from 4D-Var DA is indeed one realisation of the single-sided temporal stochastic process. 
\end{proof}

Hence, we further better the conclusion for the temporal stochastic process we arrived at in Section \ref{MT-seqrdv}.

\subsubsection{Spatial Stochastic Process} 
\label{pers2:sp}
To prove the reanalysis outputs of 4D-Var DA at a given time $t$ is one realisation from a spatial stochastic process, we will discuss two scenarios, one is that the grid locations are completely ordered like temporal indices in one dimension, and the other is that the grid locations are not ordered. 

For the first scenario, the proof idea will be similar to what we have seen in the proof of Theorem \ref{cams-output} and can be found in appendix \ref{app:sp_pers2}, and for the second scenario, we will need more additional concepts such as cylinder sets. For convenience, all of the following proofs are for reanalysis outputs at a given time $t$, hence we omit the time index $t$. 

We state the proof ideas for the non-ordered grid locations as below. 

Let $\{ s_{j'}: j' \in \mcalN (s_j), j, j' \in \mbbZ^{+} \}$ be a collection of location indices that are within the first-order neighbourhood of location $s_j$, e.g, for $j = 1$, we have $\{ s_{j'}: s_{j'} \in \mcalN (s_1) \}$, etc. We then collect them into $\mcalS = \{ \{ s_{\mcalN(s_1)}\}, \{ s_{\mcalN(s_2)}\}, \ldots \{ s_{\mcalN(s_N)}\} \}$. 

Follow the definition in \citet[p.~202]{athreya2006measure}, we define a collection of real-valued function $\mbbR^{\mcalS}$ as 
\begin{align*}
    \mbbR^{\mcalS} = \{ \omega \mid \omega: s \rightarrow \mbbR ^P \}, 
\end{align*}
here, $P$ is the number of aerosol compositions at location $s$ at time $t$. 
Further, we define a \textit{finite dimensional cylinder set (f.d.c.s.)} $\mcalC$, 
for $\mcalS_1 = \{s_1, s_2, \ldots, s_N \} \subset \mcalS$, 
$\mcalC \subset \mbbR^S$, and 
\begin{align*}
    \mcalC = \{\omega: \omega \in \mbbR^S \; \& \; (\omega(s_1), \ldots, \omega(s_N)) \in B \}, 
\end{align*}
where $B$ is a Borel set, $B \in \mcalB(\mbbR^{NP})$, $NP = n$ as we have seen in Section \ref{MT-seqrdv}. And by \citet[p.~203]{athreya2006measure}, we know a collection of such a  finite-dimensional cylinder set $\mcalC$ is a $\sigma$-algebra $\mcalR^S$. And for one f.d.c.s., $(\mbbR^S, \mcalR^S, P(\mcalC))$ is a probability space by Caratheodory extension theorem, where $P(\mcalC) = \mu_{s_1, \ldots, s_N}(B)$. 

Then we define a projection map $\pi_{(s_1, s_2, \ldots, s_N)} : \mbbR^S \rightarrow \mbbR^{NP}$, $\forall (s_1, s_2, \ldots, s_N) \in \mcalS^N$, $1 \leq N < \infty$, 
\begin{align*}
    \pi_{(s_1, s_2, \ldots, s_N)}(\omega) = (\omega(s_1), \omega(s_2), \ldots, \omega(s_N)), 
\end{align*}
in particular, for one location $s$, 
$\pi_s(\omega) = \omega(s) \triangleq \omega_s \equiv \bfy_s$, which is the observation at location $s$ at time $t$, hence the collection $\{ \omega_s:s \in \mcalS \} \subset \mcalA $ is the $\sigma$-algebra $\mcalA$ of $\Omega$, on which our measurable function $\mcalG$ is defined as seen in Section \ref{MT-seqrdv}.

We also define $\Delta s$ to be the Euclidean distance unit between any two locations $s_j, s_{j'}$ within the same first-order neighbourhood, i.e., $\Delta s = \parallel s_j - s_{j'} \parallel$, then the measurable and invertible transformation function $T: \Omega \rightarrow \Omega$ can shift $\omega$ at any one of the locations within the first-order neighbourhood of $s_j$ by $\Delta s$ unit to remain equivalent as $\omega_{s_j}$, i.e., 
$ T^{\Delta s} \omega_{[\mcalN(s_j)]_i} = \omega_{s_j}$, here $[\mcalN(s_j)]_i$ means any one of the locations in the first-order neighbourhood of $s_j$. 

So the reanalysis output at a certain location $s_j$ at a given time $t$ (omit) is 
\begin{align*}
    \bfx_{s_j}^A = \bfX_{s_j}^A (\omega_{s_j}) = \mcalG(T^{\Delta s} \omega_{[\mcalN(s_j)]_i}) = \bfX_{s_j}(\omega_{[\mcalN(s_j)]_i})
\end{align*}

For illustration, assume $\{ s_2, s_3, s_4, s_5 \}$ are within the first-order neighbourhood of $s_1$, this also implies $s_1$ is in the first-order neighbourhood of each of the locations in $\{ s_2, s_3, s_4, s_5 \}$, then we have 
\begin{align*}
    \bfx_{s_2}^A = \bfX_{s_2}^A (\omega_{s_2}) = \mcalG (\omega_{s_2}) 
    = \mcalG (T^{\Delta s} (\omega_{[\mcalN(s_2)]_i})) = \mcalG T^{\Delta s} ((\omega_{[\mcalN(s_2)]_i})) = \bfX_{s_2}(\omega_{s_1})  \\
    \vdots \qquad \qquad \qquad \qquad \vdots \qquad \qquad \qquad \qquad \vdots \qquad \qquad \qquad \qquad \vdots \qquad \qquad \qquad \qquad\\
    \bfx_{s_5}^A = \bfX_{s_5}^A (\omega_{s_5}) = \mcalG (\omega_{s_5}) 
    = \mcalG (T^{\Delta s} (\omega_{[\mcalN(s_5)]_i})) = \mcalG T^{\Delta s} ((\omega_{[\mcalN(s_5)]_i})) = \bfX_{s_5}(\omega_{s_1})
\end{align*}

From the leftmost side of the above equations, we have a collection of our reanalysis outputs $\{ \bfx_{s_2}^A, \bfx_{s_3}^A, \ldots \bfx_{s_5}^A\}$ 
at a given time across all locations within the first-order neighbourhood of $s_1$, 
and on the rightmost sides, we observe one realisation $\{ \bfX_{s_2}(\omega_{s_1}), \bfX_{s_3}(\omega_{s_1}), \ldots, \bfX_{s_5}(\omega_{s_1})\}$
of a spatially evolved random variable for a fixed sample element $\omega_{s_1}$. Hence, the collection of the 4D-Var DA reanalysis outputs at a given time $t$ is one realisation from a spatial stochastic process.

To complete our investigation into the stochasticity of the 4D-Var reanalysis outputs data, we inspect different errors associated with each run of the reanalysis output and their corresponding properties in the following section.

\section{The Errors and Their Properties}
\label{errors}

In Section \ref{mechanism}, we analysed detailed procedures of how each run of the reanalysis output is generated. We now base on these to further dissect the mechanism of how the corresponding errors are associated, categorized, propagated and correlated.

\subsection{Temporal-wise Dissection of Errors }
\label{temp-error}
In the first run of DA when $t = 0$, $\ttilde = 0 \sim \Delta t$, background state $\bfx^B_0$, initial analysis state $\bfx_0$ and observations $\bfy_{\ttilde}$ all have their own sources of errors, for example, both background state $\bfx^B_0$ and initial analysis state $\bfx_0$ are set to equal a guess which has its intrinsic random error, and due to different measurement instruments and instrument types etc., observations $\bfy_{\ttilde}$ collected during $\ttilde = 0 \sim \Delta t$ naturally contain random observational measurement error. 

Meanwhile, modelled observations $\bfH_{\ttilde}^o \bfx_{\ttilde}$ inherits errors from $\bfx_{\ttilde}$ via an NWP model and $\bfx_0$, i.e., $\bfx_{\ttilde} = \bfM_{\ttilde} \cdot \bfM_{\ttilde - \delta t} \cdots \bfM_{0+\delta t} \cdot \bfx_0$, in which the model input $\bfx_0$ has intrinsic random error and the model itself has systematic discrepancies which are due to spatial proximity (from $\bfH^o$) and temporal proximity (from $\bfM \ldots  \bfM$), 
or limited spatial and temporal resolutions simulated by the NWP model. 

And since each of these error-associated values $\bfx^B_0$, $\bfx_0$, $\bfy_{\ttilde}$ and $\bfH_{\ttilde}^o \bfM_{\ttilde} \cdot \bfM_{\ttilde - \delta t} \cdots \bfM_{0+\delta t} \cdot \bfx_0$ all plays a role in the generation of 
the first reanalysis output $\bfx^A_0$, 
these associated errors (either random or systematic) therefore become an inseparable part of the first-run reanalysis output $\bfx^A_0$.

In the second run of DA when $t = \Delta t $, $\ttilde = \Delta t \sim 2 \Delta t$, the background state $\bfx^B_{\Delta t}$ is obtained by applying an NWP model onto the first-run reanalysis output $\bfx^A_0$, that is $\bfx^B_{\Delta t} = M_{\Delta t} \cdot M_{\Delta t - \delta t} \cdots M_{0+\delta t} \cdot \bfx^A_0$. So the errors contained in the first-run reanalysis output $\bfx^A_0$ are now propagated into the background state of the second run of DA, i.e., $\bfx^B_{\Delta t}$. Similar to the analysis above for the first run, this error propagation procedure involves both the discrepancies in the NWP model and errors in the model input $\bfx^A_0$, which is received from the last run of DA.
The newly collected observations $\bfy_{\ttilde}$ surely have their own random measurement errors. And all of these errors again become a part of the reanalysis output $\bfx^A_{\Delta t}$ of the second run from which it cannot devoid. 

The same analytical logic applies to all the following runs of DA. 
We summarize and visualize these error dissections and propagation in Figure \ref{appfig:error_propagate}. 
\begin{figure}[!ht] 
    \centering
    \includegraphics[width=1\textwidth]{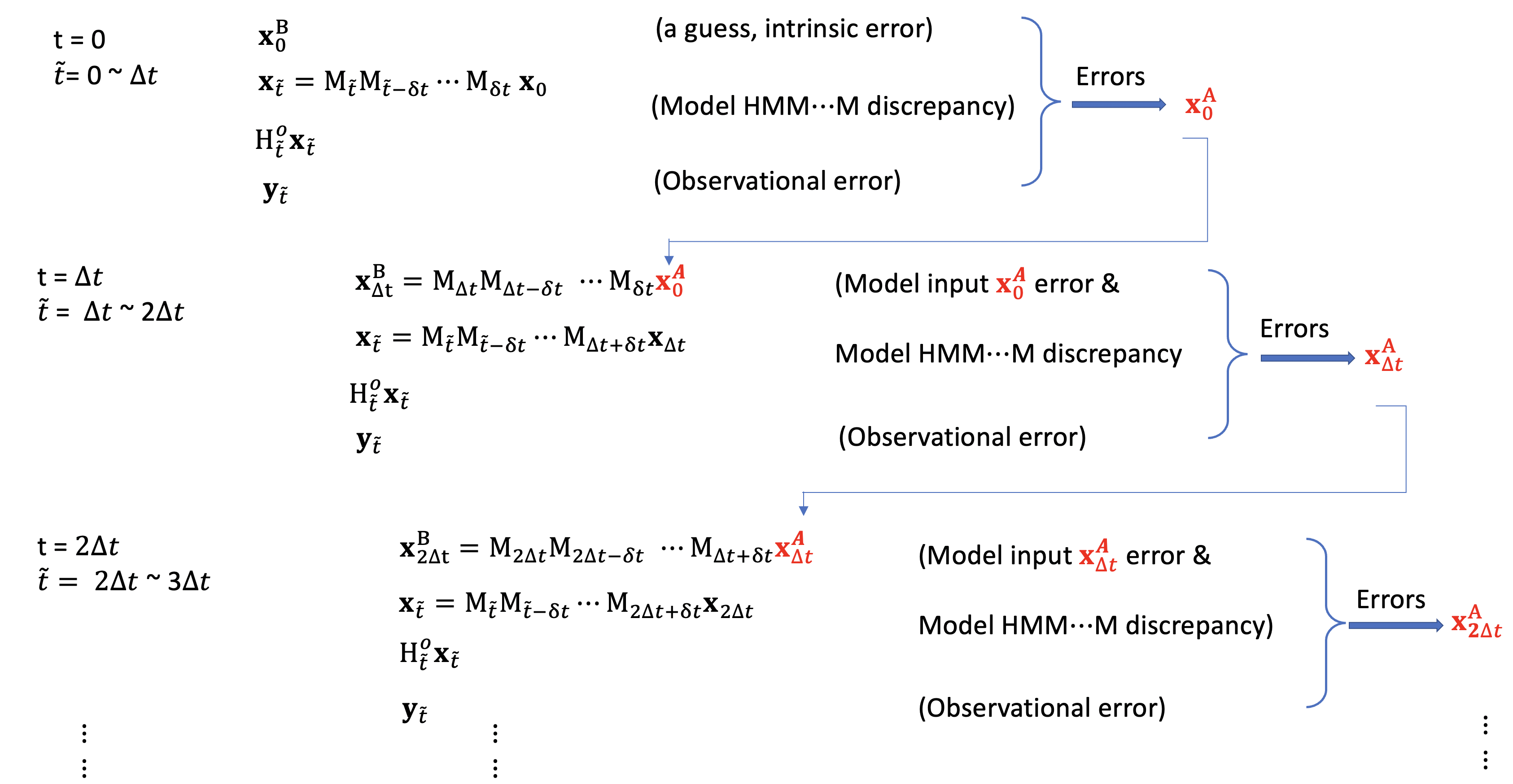}
    \caption{Schematic representation of dissections and propagation of errors associated with reanalysis output from each run of DA.}
    \label{appfig:error_propagate} 
\end{figure}

So, the reanalysis output of each run of DA or each of the temporal-wise reanalysis outputs $\{ \bfx_0^A, \bfx_{\Delta t}^A,\bfx_{2\Delta t}^A, \ldots \}$ contains extensive errors, mainly including NWP model input errors, NWP model discrepancies, and observational measurement errors. The random errors in the initial input $\bfx_0$ are to some extent reducible by calibration with some monitoring observations, see \citet[Section.~4]{rougier2007probabilistic} and \citet[Section.~3]{buizzachaos}, rather than using pure guess, while the model discrepancies and observational measurement errors are irreducible. 

And although the errors in each run of reanalysis output are propagated into the next runs of model input, therefore are correlated across different DA runs or correlated temporal-wisely, the sources of these errors are however different and are prone to be mutually independent. Specifically, for the first initial run, NWP model input $\bfx_0$ errors are from modellers' guess; model discrepancies are mainly due to the systematic proximity in space and time as well as limited spatial-temporal resolution of model simulation; and observational measurement errors are mainly from instruments etc. So for the first initial run of reanalysis output, the NWP model input error, NWP model discrepancy, and observational measurement error are reasonably assumed to be mutually independent, or put another way, they have no interactions. Mathematically, for the first run of reanalysis output, 
\begin{align*}
    \mbox{NWP model input error} \indep \mbox{NWP model discrepancy} \indep \mbox{observational measurement error}, 
\end{align*}
where ``$\indep$'' here means ``independent''.

However, if we keep thinking about each of the following runs of DA, we will arrive at a different situation. 
We know 
errors in the model input of this run are \textit{de facto} the errors in the reanalysis output from the last run, and it can be further decomposed into model-input error, model discrepancy and observational measurement error of the last run. And we also know that model discrepancy remains unchanged across different DA runs once the explicit NWP model formula has been specified (see discussion below equation \eqref{eq:bfx_tA_compact} in appendix). These together imply errors in the model input of this run are correlated with the model discrepancy of this run which is the same as that of the last run. 
Further, since observational measurement error and model input change each run, errors in the model input of this run can still be postulated to be independent of the observational measurement error of this run; meanwhile, the model discrepancy of this run and observational measurement error of this run still remain independent as well due to their independent origins. Write mathematically, for each of the following runs of DA except the first initial run,
\begin{align*}
    \mbox{NWP model input error} &\nindep \mbox{NWP model discrepancy} \\
    \mbox{NWP model input error} &\indep \mbox{observational measurement error}\\
    \mbox{NWP model discrepancy} &\indep \mbox{observational measurement error}
\end{align*}

\subsection{Spatial-wise Dissection of Errors}
To analyze spatial-wise errors, we choose a certain time $t$. 

At this time, the reanalysis output is 
$\bfx_t^A$ which can be further expanded within a finite spatial domain as 
$ \bfx_t^A \equiv
\{ \bfx^A_{s_1}(t), \bfx^A_{s_2}(t), \ldots\ \bfx^A_{s_N}(t) \}$.
The errors that $\bfx_t^A$ inherited from $\bfx^B_t$, $\bfx_t$, $\bfy_{\ttilde}$ and $\bfH_{\ttilde}^o \cdot \bfM_{\ttilde} \cdot \bfM_{\ttilde - \delta t} \cdots \bfM_{t+\delta t} \cdot \bfx_t$ as analysed above are now down onto each of the reanalysis values at each grid location. From the above Section \ref{temp-error}, we know at each grid location, the errors associated with the reanalysis outputs can be mainly characterized into NWP model input error, NWP model discrepancy, and observational measurement error, and these three types of errors have their own specific dependence/independence relationship due to their corresponding error sources.

Further, if we view these three types of errors contained in one run of reanalysis output altogether as a whole, then the correlations among these errors at 
different grid locations have two special cases: 
\begin{enumerate}
    \item reanalysis outputs at different grids all have the same amount of errors, so the correlations among reanalysis-output errors across different grids all equal one;
    \item reanalysis outputs at different grids all have their own different individual errors, so the correlations among reanalysis-output errors across different grids all equal zero, which is equivalent to grid-wise independent errors. 
\end{enumerate}
The general case is that the correlations of the errors (associated with a given run of reanalysis output) across different grid locations are between 0 and 1.
And at each grid, the errors associated with the reanalysis output at this specific grid and this specific time can still be categorized into different types according to different sources as analysed in Section \ref{temp-error}.

\subsection{Composition-wise Dissection of Errors}
\label{cmpt-error}
And as mentioned in Section \ref{mechanism}, at a given time $t$ and a given grid location $s$, $\bfx^A_{s} (t) = \bfX^A_{s} (\omega_{\bfy_{\ttilde}}; t)$ collects all the values for different atmospheric compositions, i.e., 
$
\bfx^A _{s} (t) = \bfX^A _{s} (\omega_{\bfy_{\ttilde}};t) 
= [X^{A_{PM25}} _{s}(\omega_{\bfy_{\ttilde}};t), X^{A_{BC}} _{s}(\omega_{\bfy_{\ttilde}};t), X^{A_{SU}} _{s}(\omega_{\bfy_{\ttilde}};t), \ldots] 
= [x^{A_{PM25}} _{s}(t), x^{A_{BC}} _{s}(t), x^{A_{SU}} _{s}(t), \ldots]
$
where each of these scalars possesses both spatial features and temporal ones.

So the errors associated with reanalysis output $\bfx^A _{s} (t)$ at a given time $t$ and a grid $s$ are passed onto each of the compositions' values at this specific time and grid location. Two extreme cases are either each atmospheric composition at this grid location at this time has the same amount of errors, that is the correlations among errors across different atmospheric compositions at this location at this time are all equal to one, or the correlations are all equal to zero meaning the reanalysis-output errors across different atmospheric compositions at this location and this time are completely different. 
And in general, the correlations of errors across different atmospheric compositions at a given location and a given time are between 0 and 1. 

And same as above, the errors associated with each atmospheric composition at a given grid and a given time can be categorized into different types corresponding to different error sources, and different types of errors have their mutual dependence/independence relations.

\subsection{The Role of Error Covariance Matrices in $J$}
Note that the role of two error covariance matrices $\bfB$ and $\bfR$ in the cost function $J$ mentioned in Section \ref{mechanism} is just to re-scale the errors in the background state $\bfx^B$ and observation $\bfy$ so as
to standardize them for general comparisons, but multiplying the inversion of these two matrices (i.e. $\bfB^{-1}$ and $\bfR^{-1}$) are unable to eliminate the random errors in $\bfx^B$ and $\bfy$.



\section{Discussion} 
\label{discussion}

This research is motivated by the unclear stoachstic property of the ECMWF CAMS reanalysis data set which contains resourceful information for researchers in many environmental-related fields, e.g., spatio-temporal modelling, public health, climate change, environmental intelligence etc. 
And the aim of this paper is to investigate the stochastic property of these reanalysis outputs generated from computers via a 4D-Var DA mechanism.




We first gave a clear exposition of the 4D-Var DA mechanism on which later proofs and reasoning stand. 

We adopted 
measure theory and proved the existence of stochasticity in these reanalysis outputs from two perspectives. 
Specifically, from the perspective of a sequence of random variables, we demonstrated the tangible existence of temporal and spatial stochastic processes associated with the reanalysis outputs of the 4D-Var DA 
, and these processes are essentially digitised versions of real-world hidden temporal and spatial processes,
see Lemma \ref{lemma 4.1}, \ref{lemma 4.2} and \ref{lemma 4.3}; and in particular, we confirmed at a given time $t$, the corresponding reanalysis outputs are one realisation from a spatial stochastic process, see Theorem \ref{sp-stochastic_pro};
from the perspective of an abstract dynamic system, we further revealed that the collection of the reanalysis outputs from all runs of 4D-Var DA is one realisation of a temporal stochastic process, see Theorem \ref{cams-output}. The existence of spatial process and one realisation from the spatial process are also proved from this perspective, see Section \ref{pers2:sp}.

These results mean, in practice, we can treat the ECMWF CAMS reanalysis data set the same as those observational measurements obtained from monitoring equipment, which are usually deemed to have intrinsic randomness,   
and therefore stochastic spatio-temporal models are applicable to this ECMWF CAMS reanalysis data set. 

We also comprehensively analysed different sources for different types of errors associated with these reanalysis outputs, see Section \ref{errors}. In general, they are model input error which is reducible; model discrepancy and observational measurement error, which are irreducible. In addition, we deciphered the mutual dependence/independence relationships among these three types of errors, which altogether serve as definite guidance on the modelling of error terms. 
Specifically, 
the mutual independence between model discrepancy and observational measurement error allows us to model the reanalysis outputs data using additive error terms and do not need to model any interactions between them.


Altogether, with the proven existence of spatial and temporal stochastic processes in the reanalysis data set as well as the mutual independence between model discrepancy and measurement error, 
those standard spatio-temporal stochastic modelling frameworks, 
for example, \textit{Data = Covariates + Spatial random effects + Temporal random effects + (Spatio-temporal interaction) + Random measurement error} \citep[p. 304-305]{cressie2015statistics} is applicable for the ECMWF CAMS reanalysis data set. 

We, therefore, expand the utility scope of the reanalysis outputs of 4D-Var DA (i.e., ECMWF CAMS reanalysis data set)
beyond those empirical utilities such as climatological computing (mean, percentiles, etc.), trends studying, geographical visualisation mapping, etc. and limited statistical applications such as uncertainty quantification and data fusion. Instead, these reanalysis data can be modelled by stochastic statistical models especially stochastic spatio-temporal models solely without fusion or ensemble.

From the error analysis in Section \ref{errors}, we realise that the spatial and temporal information was involved and addressed by two matrices $\bfH$ and $\bfM$ in the generation process of reanalysis data $\bfx^A_t$, which is non-stochastic. And from our proofs, we know the reanalysis outputs $\bfx^A_t$ are essentially realisations from digitised versions of real-world hidden spatial and temporal processes, so these two types of information can actually be better addressed by using a stochastic modelling scheme. 

Meanwhile, in Section \ref{cmpt-error}, we also realise that the errors are generally correlated across different aerosol components at a given time and grid, yet in Section \ref{mechanism}, we mentioned that the 4D-Var DA were implemented univariately for convenience in practice, hence a multivaraite stochastic spatio-temporal modelling scheme may be able to better the reanalysis output data further, 
and the refined result can then, in turn, benefit wider environmental impact studies such as public health, climate change, environmental intelligence etc. 

The conclusions of this paper alone also serve as a cogent theoretical foundation for spatio-temporal modellers and environmental AI researchers to embark on their research directly should they intend to use this ECMWF CAMS reanalysis data set with stochastic models.

One thing to mention is although this paper emphasized the 4D-Var DA mechanism, the real method used to produce the ECMWF CAMS reanalysis data set is more precisely the \textit{incremental} 4D-Var. 
However, no matter whether it's 
the key features, function structures or properties of this incremental 4D-Var are all exactly the same as those of 4D-Var.
The incremental 4D-Var just makes some minor modifications towards the state components of the cost function $J$ (i.e. $\bfx^B_t$, $\bfx_t$ and $H_{\ttilde}^o \bfx_{\ttilde}$)
by separating each of them into a self-defined reference state $\bfx_t^R$ and an incremental state $\delta \bfx_t$, and replacing the minimisation target from $\bfx_t$ to $\delta \bfx_t$, meanwhile setting the NWP model $\bfM$ to a linear form instead of a nonlinear one as in 4D-Var. 
So, the main purpose of modifying the 4D-Var into an incremental one is to lessen the computational burden. Since the original 4D-Var has a more understandable meaning for its various components and structures, see Section \ref{mechanism}, and therefore has a clearer demonstration effect, we emphasized mainly the 4D-Var. For full details of incremental 4D-Var, see \citet[p.~17-19]{Ross4DVARLect}. 

In addition, as pointed out by \citet[Section 2.4]{wikle2007bayesian}, 
optimization problems can be expressed equivalently as variational problems, and for high-dimensional tasks, variational method is more computationally efficient. Therefore, in real practice, 4D-Var DA reanalysis adopts variational inference method to obtain the outputs 
\citep[Section 3.3]{bannister2001elementary}.

Another thing to note is although the proofs in this paper focus mainly on the existence of discrete stochastic processes both temporally and spatially from a practical assimilation operational perspective, these proofs can be extended to continuous situations without any effort by just setting the temporal increment $\Delta t$ to infinitely small to achieve a continuous temporal process; and for continuous spatial processes, either by increasing the spatial domain through \textit{increasing-domain asymptotics} \citep[p.~350]{cressie1993statistics} 
or by setting the number of grid locations $N$ within a finite domain to be larger and larger via \textit{infill asymptotics} \citep[p.~350]{cressie1993statistics}. The corresponding changes in terms of the symbol would just be replacing the subscripts, e.g., $X_t$, $X_{s_i}$ by arguments in function brackets, i.e., $X(t)$, $X(s_i)$. 

Regarding future work, there are actually other ideas to prove the existence of stochasticity of the reanalysis data from the 4D-Var DA.

Notice in Section \ref{exam-stoch}, we treated the background state $\bfx_t^B$ as a constant due to the fact that it is obtained from the last-run reanalysis output whose randomness is yet to be proven and we focused only on the random observations $\bfy_{\ttilde}$. We could otherwise ignore the assimilated observations $\bfy_{\ttilde}$ at each run, and only focus on the relationship between reanalysis output $\bfx_t^A$ at time $t$ and the background state $\bfx_t^B$ at time $t$, which is essentially the reanalysis output at time $t-\Delta t$, and this is then a classical deterministic dynamic system. However, even from this deterministic system, we are still able to connect with stochasticity as long as the initial state of this deterministic system is random, see, e.g., \cite{berliner1992statistics}. 
The random initial state is usually justified by our imperfect knowledge to initialize the system, and usually has a tremendous impact on the ability of a deterministic system to decide its future value definitely and uniquely, hence, usually leads to a status called \textit{deterministic chaos} \citep[p.~1]{chan2001chaos}, in which one is typically unable to differentiate between \textit{chaotic randomness} and stochastic-process randomness. This is also discussed in \cite[p.~58-59]{cressie2015statistics}.

\section*{Acknowledgements}
This work is supported by The Alan Turing Institute through a Turing Doctoral Scholarship.
The first author is grateful to Prof.~Peter Ashwin who provided suggestions on the proofs. The first author's gratitude also extends to Dr.~Antje Inness who provided the background information about the time evolution matrix in real practice.

\clearpage
\printbibliography[title={References}]
\clearpage

\begin{appendices}
\section{Time Evolution of Model State by NWP}
\label{app:NWP}
The second quadratic term, e.g., $\sum_{\ttilde = t}^{t + \Delta t} (\bfy_{\ttilde} - H_{\ttilde}^o \bfx_{\ttilde})^T(\bfy_{\ttilde} - H_{\ttilde}^o \bfx_{\ttilde})$ in the cost function J \eqref{J_fun} involves not only the model state $\bfx_{t}$ at time $t$, but also model states $\bfx_{t + \delta t}$ at discrete incremental time steps $t + \delta t$ within the time window, e.g., $t \sim (t + \Delta t)$. Such model states  $\bfx_{t + \delta t}$ are acquired by evolving from earlier state $\bfx_t$ through a functional form $m(.)$ driven by physical law (hence is usually non-linear), that is, for any time-step $t$, 
\begin{align*}
    \bfx_{t + \delta t} = m(\bfx_t).
\end{align*}
The above equation indicates each component in the vector $\bfx_{t + \delta t}$ is a non-linear combination of each component of vector $\bfx_t$. 

The non-linear $m(\bfx_t)$ is then approximated with linear terms using Taylor expansion expanded at a certain point, e.g., $\bfx_t^B$ or simply $\bold{0}$. 

For example, by Taylor expansion of $\bfx_{t + \delta t} = m(\bfx_t)$ at $\bfx_t^B$, we get
\begin{align*}
    \bfx_{t + \delta t} &= m(\bfx_t) \\
    &= m(\bfx_t^B) + m'(\bfx_t^B) (\bfx_t - \bfx_t^B) + o(\cdot), 
\end{align*}
here, $m'(\bfx_t^B)$ is the first derivative of $m(\bfx_t)$ with respect to (w.r.t.) each component of vector $\bfx_t$ and then evaluated at $\bfx_t = \bfx_t^B$. As a demonstration, we assume $\bfx_t$ is a 2-D vector, then 
\begin{align*}
    m'(\bfx_t^B)  = \left. 
    \left[ \begin{array}{cc}
    \frac{\partial [m(\bfx_t)]_1}{\partial[\bfx_t]_1} & \frac{\partial [m(\bfx_t)]_1}{\partial[\bfx_t]_2} \\
    \frac{\partial [m(\bfx_t)]_2}{\partial[\bfx_t]_1} & \frac{\partial [m(\bfx_t)]_2}{\partial[\bfx_t]_2}  \end{array} \right] 
    \right|_{\bfx_t = \bfx_t^B} \triangleq M
\end{align*}

So, 
\begin{align*}
    \bfx_{t + \delta t} &= m(\bfx_t) \\
    &= m(\bfx_t^B) + M (\bfx_t - \bfx_t^B) + o(\cdot).
\end{align*}
And if the Taylor expansion is expanded at $\bold{0}$, then 
\begin{align*}
    \bfx_{t + \delta t} &= m(\bfx_t) \\
    &= m(\bold{0}) + M (\bfx_t) + o(\cdot)\\
    &\approx M \bfx_t, 
\end{align*}
where matrix $M$ contains all the known first derivative of $m(\bfx_t)$ evaluated at $\bold{0}$. 

Denote the $M$ in $\bfx_{t + \delta t} = M \bfx_t$ more specifically as $M_{t + \delta t}$ and follow the same idea of approximating one model state with Taylor expansion at $\bold{0}$, we get
\begin{align*}
    \bfx_{t + \delta t} &= M_{t + \delta t} \bfx_t \\
    &= M_{t + \delta t} M_t \bfx_{t - \delta t} \qquad \mbox{(by $\bfx_t = M_t \bfx_{t - \delta t)}$} \\
    &= M_{t + \delta t} M_t M_{t - \delta t} \bfx_{t - 2 \delta t} \qquad \mbox{(by $\bfx_{t - \delta t} = M_{t - \delta t} \bfx_{t - 2 \delta t)}$} \\
    \ldots \\
    &= M_{t + \delta t} M_t M_{t - \delta t} \ldots M_{2 \delta t} \bfx_{\delta t} \\
    &= M_{t + \delta t} M_t M_{t - \delta t} \ldots M_{2 \delta t} M_{\delta t} \bfx_0 \qquad \mbox{(by $\bfx_{\delta t} = M_{\delta t} \bfx_0)$}
\end{align*}
Hence, the description of the model state $\bfx_{\ttilde}$ evolution as $\bfx_t = M_{t} M_{t-\delta t} \ldots M_{\delta t} \bfx_0$. 

Note in real practice, $m(\cdot)$ is usually simplified to linear forms. 
\clearpage

\section{Structure of $\bfH_{\ttilde} ^o$}
\label{app:H}
Let $\bfs_i = (x_i, y_i)$ be the centroid of each grid. 
Observations at each of these centroids are obtainable, e.g., at centroids $(x_2, y_i)$, $(x_3, y_i)$, we have observations $O_{2i}$, $O_{3i}$. 
We want to know the value of an observation $O_{Ti}$ which is not at centroid but at a location $(x_T, y_i)$ in between $(x_2, y_i)$, $(x_3, y_i)$, that is $x_2 < x_T < x_3$.
This would require interpolation coefficients to smooth the two observations $O_{2i}$, $O_{3i}$, i.e., $O_{Ti} =  \alpha O_{2i} + \beta O_{3i}$. 
Here $\alpha = \frac{x_3 - x_T}{x_3 - x_2}$, and $\beta = \frac{x_T - x_2}{x_3 - x_2}$. 
And these interpolation coefficients are the elements in the $\bfH_{\ttilde} ^o$. 

We can see that each element in $\bfH_{\ttilde} ^o$ is within $(0, 1)$ if the locations are assumed to be ordered.

\section{Illustrative Derivation of Cost Function $J$ and $\nabla_{\bfx}J$}
\label{app:J&DJ}
For clear inspection, we omit the background-state error covariance matrix $\bfB$ and observational error covariance matrix $\bfR$ which are just scalars, and assume $\bfx_{\cdot}$and $\bfy$ are just 1-dimensional rather than n-dimensional vectors for now so as to see the quadratic structure clearer. 

In the first run, $t = 0$, $\ttilde = 0 \sim \Delta t$,
\begin{align*}
    J[x_0] &\propto \frac{1}{2} (x^B_0 - x_0)^2 + 
    \frac{1}{2} \{(y_0 - H_0^o x_0)^2 + 
(y_{\delta t} - H_{\delta t}^o x_{\delta t})^2 + \ldots + 
(y_{\Delta t} - H_{\Delta t}^o x_{\Delta t})^2 \} \\
&= \frac{1}{2} (x^B_0 - x_0)^2 + \frac{1}{2} \{(y_0 - H_0^o x_0)^2 + (y_{\delta t} - H_{\delta t}^o M_{\delta t}x_{0})^2
+ \ldots + (y_{\Delta t} - H_{\Delta t}^o M_{\Delta t} M_{\Delta t - \delta t}\ldots M_{\delta t}x_{0})^2 \}
\end{align*}

Take first derivative with respect to (w.r.t.) $x_0$
\begin{align*}
    \nabla_{x_0} J &\propto -(x^B_0 - x_0) 
    - H_0^o(y_0 - H_0^o x_0)
    - \ldots + 
    (- H_{\Delta t}^0 M_{\Delta t} M_{\Delta t - \delta t} \ldots M_{\delta t} )(y_{\Delta t} - H_{\Delta t}^0 M_{\Delta t} M_{\Delta t - \delta t} \ldots M_{\delta t} x_0) \\
    &= -(x^B_0 - x_0) - \sum_{\ttilde=0}^{\Delta t} H_{\ttilde}^o M_{\ttilde} M_{\ttilde-\delta t} \ldots M_{\delta t +0}(y_{\ttilde} - H_{\ttilde}^o M_{\ttilde} M_{{\ttilde} - \delta t} \ldots M_{\delta t +0} x_0), \; M_0 = 1.
\end{align*}

In the second run, $t = \Delta t$, $\ttilde = \Delta t \sim 2 \Delta t$, 
\begin{align*}
    J[x_{\Delta t}] &\propto \frac{1}{2} (x_{\Delta t} ^B - x_{\Delta t})^2 + \frac{1}{2}
    \sum_{\ttilde=\Delta t} ^{2 \Delta t} (y_{\ttilde} - H_{\ttilde}^o x_{\ttilde})^2 \\
    &= \frac{1}{2} (x_{\Delta t} ^B - x_{\Delta t})^2 + \frac{1}{2} \{ 
    (y_{\Delta t} - H_{\Delta t}^o x_{\Delta t})^2 +  (y_{\Delta t + \delta t} - H_{\Delta t + \delta t}^o x_{\Delta t + \delta t})^2 + \ldots + (y_{2 \Delta t} - H_{2 \Delta t}^o x_{2 \Delta t})^2    
    \} \\
    &= \frac{1}{2} (x_{\Delta t} ^B - x_{\Delta t})^2 + \frac{1}{2} \{
    (y_{\Delta t} - H_{\Delta t}^o x_{\Delta t})^2 
     + \ldots + (y_{2 \Delta t} - H_{2 \Delta t}^o M_{2 \Delta t} M_{2 \Delta t-\delta t} \ldots M_{\Delta t + \delta t} x_{\Delta t})^2
    \}    
\end{align*}

Take first derivative w.r.t. $x_{\Delta t}$ for each term above and write in a compact summation form
\begin{align*}
    \nabla_{x_\Delta t} J \propto -(x_{\Delta t}^B - x_{\Delta t}) - 
    \sum_{\ttilde = \Delta t}^{2 \Delta t} 
    H_{\ttilde}^o M_{\ttilde} M_{\ttilde-\delta t} \ldots M_{\Delta t + \delta t} 
    (y_{\ttilde} - H_{\ttilde}^o M_{\ttilde} M_{\ttilde-\delta t} \ldots M_{\Delta t + \delta t} x_{\Delta t}), \; M_{\Delta t} = 1.   
\end{align*}

For general form of the first derivative of $J$ w.r.t. the desired initial state $\bfx_t$ at time t in n-dimensional vector, we have
\begin{align}
    \label{nablaJ}
    \nabla_{\bfx_t} J \propto -(\bfx_{t}^B - \bfx_t) - \sum_{\ttilde = t}^{t + \Delta t} (\bfH_{\ttilde}^o \bfM_{\ttilde} \bfM_{\ttilde-\delta t} \ldots \bfM_{t + \delta t})^T (\bfy_{\ttilde} - \bfH_{\ttilde}^o \bfM_{\ttilde} \bfM_{\ttilde-\delta t} \ldots \bfM_{t + \delta t} \bfx_t), 
\end{align}
in which $\bfM_{\ttilde} = I$ whenever $\ttilde = t$.

To get the desired initial state $\bfx_t$ at each time $t$, set the above equation \eqref{nablaJ} to $\textbf{0}$ and rearrange, 
\begin{align*}
    \bfx_{t}^B + \sum_{\ttilde = t}^{t + \Delta t} (\bfH_{\ttilde}^o \bfM_{\ttilde} \bfM_{\ttilde-\delta t} \ldots \bfM_{t + \delta t})^T \bfy_{\ttilde} = 
    (I + \sum_{\ttilde = t}^{t + \Delta t} (\bfH_{\ttilde}^o \bfM_{\ttilde} \bfM_{\ttilde-\delta t} \ldots \bfM_{t + \delta t})^T (\bfH_{\ttilde}^o \bfM_{\ttilde} \bfM_{\ttilde-\delta t} \ldots \bfM_{t + \delta t}) ) \bfx_t,
\end{align*}
where the coefficient matrix in the form of $(I + \sum_{\ttilde = t}^{t + \Delta t} (\bfH \bfM \dots \bfM)^T(\bfH \bfM \dots \bfM) )$ ahead of the desired initial state $\bfx_t$ must be non-singular hence invertible, so 
\begin{align}
    \label{eq:bfx_tA}
    \bfx_t \propto  (I + \sum (\cdot)^T (\cdot))^{-1} \bfx_t ^B + 
    \sum_{\ttilde = t}^{t + \Delta t} (I + \sum (\cdot)^T (\cdot))^{-1} (\bfH_{\ttilde}^o \bfM_{\ttilde} \bfM_{\ttilde-\delta t} \ldots \bfM_{t + \delta t})^T \bfy_{\ttilde}
\end{align}
where the ``$\cdot$" in $(\cdot)^T(\cdot)$ is $\bfH \bfM \dots \bfM$.  
And we could further denote the coefficient matrices ahead of $\bfx_t^B$ and $\bfy_{\ttilde}$ as $L$ and $K_{\ttilde}$ for convenience, and get
\begin{align}
    \label{eq:bfx_tA_compact}
    \bfx_t \propto L \bfx_t^B + \sum_{\ttilde = t}^{t + \Delta t} K_{\ttilde} \bfy_{\ttilde}
\end{align}

One thing to make clear is matrix $L = (I + \sum (\bfH \bfM \dots \bfM)^T(\bfH \bfM \dots \bfM ))^{-1}$ always remain the same form across different runs (or time-steps) since the elements in $\bfH$ only depend on relative (Euclidean) distance between locations, which don't change across different runs (or time-steps), and elements of $\bfM$ are the first derivatives of a non-linear form $\bfx_{t+\delta t} = m(\bfx_t)$ evaluated at $\textbf{0}$ for any $t$, so only depend on the relative difference of two time-steps, not on any explicit time $t$, hence won't change across different runs (or time-steps) as well. 

And for a given run, $\ttilde$ ranges from $t \sim (t+\Delta t)$, each coefficient matrix ahead of $y_{\ttilde}$ is in different form according to $\ttilde$, e.g., if $\ttilde = t$, then the coefficient matrix $K_{\ttilde}$ is in the form of $\bfH \bfM$, if $\ttilde = t + \delta t$,  the coefficient matrix $K_{\ttilde}$ is in the form of $\bfH \bfM \bfM$, etc. 

\section{Proofs}
\subsection{Proof of Lemma \ref{lemma 4.3}}
\begin{proof}
\label{app:prooflemma4.3}
By Lemma \ref{lemma 4.1}, $\bfX_t^A$ is a measurable function of $\bfY_t$, hence at a given time $t$, each of the spatially indexed collection $\{ \bfX^A_{s_i}(t)$, $i = 1, 2, \ldots \}$ is a measurable function of $\bfY_t$, hence each of them is a spatially indexed random object defined on the same sample space $\Omega$ of $\bfY_t$. 
\end{proof}

\subsection{Proof of Theorem \ref{sp-stochastic_pro}}
\begin{proof}
\label{app:proofthrm4.1}
By Lemma \ref{lemma 4.3}, $\{ \bfX^A_{s_i}(t)$, $i = 1, 2, \ldots \}$  is a spatial stochastic process, and at given time $t$, $\omega_t \equiv \bfy_t$ is a fixed sample element, hence by definition, reanalysis outputs $\{ \bfx_{s_i}^A(t) = \bfX_{s_i}^A(\omega_t; t): i = 1,2, \ldots \}$ is one realisation or a sample path of the spatial stochastic process.    
\end{proof}

\subsection{Proof of Section \ref{pers2:sp}}
\label{app:sp_pers2}
In the first scenario, we assume the grid locations are ordered in one dimension, i.e., for two locations $s_i$ and $s_j$, $j > i$,  then we define measurable and invertible transformation function $T: \Omega \rightarrow \Omega$ as 
$T^{\Delta s} (\omega_{s_1}, \omega_{s_2}, \ldots )= (\omega_{s_2}, \omega_{s_3}, \ldots) $, where $\Delta s$ is the Euclidean distance unit between $s_i$ and $s_j$, $(j > i)$,
therefore, we have $\bfx_{s_1}^A = \bfX_{s_1}^A(\omega_{s_1})$,
\begin{align*}
    \bfx_{s_2}^A &= \bfX_{s_2}^A(\omega_{s_2}) = \mcalG (\omega_{s_2}) = \mcalG (T^{\Delta s} (\omega_{s_1})) = \mcalG T^{\Delta s} (\omega_{s_1}) = \bfX_{s_2}(\omega_{s_1}) \\   \bfx_{s_3}^A &= \bfX_{s_3}^A (\omega_{s_3}) =  \mcalG (\omega_{s_3}) = \mcalG (T^{\Delta s} (\omega_{s_2})) = \mcalG (T^{\Delta s} (T^{\Delta s} (\omega_{s_1}))) =
    \mcalG T^{2 \Delta t}(\omega_{s_1}) = \bfX_{s_3}(\omega_{s_1})\\
    \vdots &\qquad \qquad \vdots \qquad \qquad \vdots \qquad \qquad \vdots \qquad \qquad
\end{align*}

So, from the rightmost side of above equations, we have a collection of our reanalysis outputs at a given time, and from the leftmost side of above equations, we have one realisation from a spatially evolved random variable at a fixed sample element $\omega_{s_1}$.



\end{appendices}

\end{document}